\documentclass[times,sort&compress,3p]{elsarticle}
\usepackage{amssymb}

\usepackage[USenglish]{babel} 
\usepackage{graphicx} 
\usepackage{tikz}
\usepackage{framed}
\usepackage{floatrow}
\usepackage{remreset} 
\usepackage[nouppercase]{scrpage2} 
\usepackage{amsmath} 
\usepackage{amssymb} 
\usepackage{faktor}
\usepackage[thmmarks, 
amsmath, 
hyperref 
]{ntheorem} 
\usepackage{bm} 
\usepackage{bbm} 
\usepackage{enumitem} 
\usepackage[hang]{subfigure} 
\usepackage{wrapfig} 
\usepackage{tabularx} 
\usepackage{color}
\usepackage{dcolumn} 
\usepackage{booktabs} 
\usepackage{listings} 
\usepackage{psfrag} 
\usepackage[pdftex, 
setpagesize=false, 
pdfborder={0 0 0}, 
pdfpagemode=UseOutlines, 
]{hyperref} 
\usepackage{algorithm}
\usepackage[noend]{algpseudocode}
\usepackage[labelfont=bf]{caption}

\theoremstyle{break} 
\theoremheaderfont{\bfseries}
\theorembodyfont{}
\theoremseparator{}
\newtheorem{definition}{Definition} 
\newtheorem{lemma}[definition]{Lemma}
\newtheorem{theorem}[definition]{Theorem}

\newtheorem{remark}{Remark} 
\newtheorem{example}{Example} 
\newtheorem{algorithm}{Algorithm}
\theoremstyle{nonumberplain} 
\theoremsymbol{$\Box$}
\newtheorem{proof}{Proof:}

\def\d{\mathrm{d}}

\captionsetup{labelfont=bf}


\newcommand*{\IP}{\Pr}
\newcommand*{\IE}{{\rm{E}}}
\newcommand*{\IR}{\mathbb{R}}

\newcommand*{\IN}{\mathbb{N}}

\newcommand{\norm}[1]{\left\lVert#1\right\rVert}


 \journal{...}

\begin{document}

\begin{frontmatter}

\title{Stochastic decomposition for $\ell_p$-norm symmetric survival functions on the positive orthant}

\author[A1]{Jan-Frederik~Mai\corref{mycorrespondingauthor}}
\author[A2]{Ruodu Wang}

\address[A1]{XAIA Investment GmbH, Sonnenstr.\ 19, 80331 M\"unchen}
\address[A2]{Department of Statistics and Actuarial Science, University of Waterloo}

\cortext[mycorrespondingauthor]{Corresponding author. Email address: \url{mai@tum.de}}


\begin{abstract}
We derive a stochastic representation for the probability distribution on the positive orthant $(0,\infty)^d$ whose association between components is minimal among all probability laws with $\ell_p$-norm symmetric survival functions. It is given by a transformation of a uniform distribution on the standard unit simplex that is multiplied with an independent finite mixture of certain beta distributions and an additional atom at unity. On the one hand, this implies an efficient simulation algorithm for arbitrary probability laws with $\ell_p$-norm symmetric survival function. On the other hand, this result is leveraged to construct an exact simulation algorithm for max-infinitely divisible probability distributions on the positive orthant whose exponent measure has $\ell_p$-norm symmetric survival function. Both applications generalize existing results for the case $p=1$ to the case of arbitrary $p \geq 1$.
\end{abstract}

\begin{keyword}
Archimedean copula \sep max-infinitely divisible \sep $d$-monotone function \sep simulation algorithm

\MSC[2020] 60E05 \sep 65C10 \sep 60E07

\end{keyword}

\end{frontmatter}

\section{Introduction}
We fix $p \geq 1$ and write $\theta:=1/p$ throughout to simplify notation. Let $\mu$ be a measure on $[0,\infty]^d$ with the property that its survival function takes the specific form
\begin{gather}
\mu\big( (\bm{x},\bm{\infty}]\big) = \varphi\big( \norm{\bm{x}}_p\big),\quad \bm{x}> \bm{0},
\label{gen_sf}
\end{gather}
for some function $\varphi: [0,\infty) \rightarrow [0,\infty)$ of one variable. Since the survival function of $\mu$ is invariant with respect to changes in the $\ell_p$-norm of its argument $\bm{x}$, it is called {$\ell_p$-norm symmetric}. Probability measures $\mu$ of the kind (\ref{gen_sf}), as studied in \cite{charpentier14}, widely appear in many areas of applications including finance, risk management, and environmental sciences; we refer to \cite{chatelain20} for background, examples, and statistical inference. Non-exchangeable extensions of (\ref{gen_sf}) are discussed in \cite{hofert18,hofert20}.
\par
When integrating with respect to a measure in $\IR^d$, it is sometimes convenient to perform this integration in two steps: first integrate with respect to a ``direction'' specified by some measure on the (bounded) unit ball of some norm, second integrate with respect to a one-dimensional ``radial'' measure. The representation (\ref{gen_sf}) for the survival function of $\mu$ suggests that such a decomposition is possible with a ``directional'' measure on the unit ball of the $\ell_p$-norm and $\varphi$ accounting for the ``radial'' part. If $\mu$ is a probability measure, intuitively this means that in order to simulate a random vector $\bm{Z} \sim \mu$ one may first simulate a bounded random vector taking values within the unit ball of the $\ell_p$-norm, and subsequently multiply this random vector with an independent radius. We also consider the case of non-finite Radon measures $\mu$ on $[\bm{0},\bm{\infty}] \setminus \{\bm{0}\}$, meaning that $\mu$ is non-finite but assigns finite measure to all closed sets that are bounded away from the origin $\bm{0}$. These play an important role in the context of {max-infinitely divisible probability distributions}, which are parameterized in terms of such a Radon measure, called the {exponent measure}, see \cite{resnick87} for a textbook account. A random vector is max-infinitely divisible if for arbitrary $n \in \IN$ it can be represented in distribution as the component-wise maximum of $n$ independent and identically distributed random vectors. Canonical stochastic representations for such random vectors rely on the notion of Poisson random measure with mean measure $\mu$. If one accomplishes a decomposition into directional and radial parts for the mean measure $\mu$, this can be leveraged to construct an exact simulation algorithm of the associated max-infinitely divisible probability law, as we will demonstrate for $\mu$ satisfying (\ref{gen_sf}). 
\par
In order to prepare the reader for the technical tools involved in the present article, it is instructive to notice that the function $\varphi$ in (\ref{gen_sf}) is necessarily $d$-monotone. We recall that a function $\varphi:(0,\infty) \rightarrow [0,\infty)$ is {$d$-monotone} if the derivatives $\varphi^{(k)}$ exist for $k\in \{0,\ldots,d-2\}$, and $(-1)^k\,\varphi^{(k)}$ is non-negative, non-increasing and convex. Such functions play an important role not only in the context of $\ell_p$-norm symmetric multivariate survival functions, but also in the context of $\ell_p$-symmetric multivariate characteristic functions, see \cite{gneiting98}. A result of R.E.\ Williamson in \cite{williamson56} provides a representation for $d$-monotone functions as integrals over certain simple functions with respect to a uniquely associated probability distribution on $[0,\infty)$. Thus, they arise as analytical transforms of probability measures, generalizing the notion of a Laplace transform in a certain sense. In general, this transform can be inverted to obtain the associated probability distribution, but in concrete cases this inversion is not simple to figure out in a feasible form. Key to our results is the inversion of the $d$-monotone function $x \mapsto (1-x^{\theta})_+^{d-1}$, whose associated probability distribution is shown to be related to a finite mixture of certain beta distributions. 
\par
If $\mu$ with survival function (\ref{gen_sf}) is a probability measure on $(0,\infty)^d$, it follows from results in \cite{mcneil09,charpentier14} (this logic being recalled in (\ref{gen_ellpsurvfunc}) below) that $\varphi$ is $d$-monotone with $\varphi(0)=1$ and $\mu$ is the distribution of
\begin{gather*}
\bm{Z} \sim R\,V_{p}\,\big(\bm{U}^{(1)}\big)^{\theta}, 
\end{gather*}
where $R$ is a random variable on $(0,\infty)$ whose distribution $F_{\varphi}$ depends solely on $\varphi$, the vector $\bm{U}^{(1)}$ is uniformly distributed over the standard unit simplex
 $S_{d,1}:=\{\bm{x} \in [0,1]^d\,:\,\norm{\bm{x}}_1=1\}$,
 $V_p$ is a random variable on $[0,1]$ whose distribution $F_p$ solely depends on $p$,
 and $R$, $V_p$ and $\bm{U}^{(1)}$ are independent. Here and throughout, raising a vector to a power $\theta$, as well as applying other functions of one variable to a vector, should always be understood component-wise. Equivalently, $\mu$ can be factored as 
\begin{gather}
\mu(A) = \int_{(0,\infty)}\int_{[0,1]}\,\int_{S_{d,1}} 1_{\{r\,v\,\bm{u}^{\theta} \in A \}}\,\mathrm{d}\bm{u}\,\mathrm{d}F_p(v)\,\mathrm{d}F_{\varphi}(r),\quad A \subset (0,\infty)^d \mbox{ a Borel set}.
\label{factor_ident}
\end{gather}
The distribution function $F_p$ of $V_p$ has not been explicitly found to date. We derive an exact representation for this probability distribution, finding that a random variable $V_p \sim F_p$ satisfies the distributional equality
\begin{gather*}
V_p \sim  W_{(d+1-D_{\theta})},\quad \IP(D_{\theta}=i)=a_i^{(d)},\quad i\in \{1,\ldots,d\},
\end{gather*}
where, independently of $D_{\theta}$, we denote by  $W_{(1)} \leq \ldots \leq W_{(d-1)}$ the order statistics of independent standard uniform random variables $W_1,\ldots,W_{d-1}$   and $W_{(d)}=1$, and the mixture probabilities $(a_1^{(d)},\ldots,a_d^{(d)}) \in S_{d,1}$ can conveniently be computed by the recursive relationship
\begin{gather}
a^{(k)}_i  = a^{(k-1)}_{i}\,\theta\,\frac{k-i}{k-1}+a^{(k-1)}_{i-1}\,\Big( 1-\theta\,\frac{k-i+1}{k-1}\Big) ,\quad i \in \{1,\ldots,k\},\quad k\in \{2,\ldots,d\},
\label{eq:recur-a}
\end{gather}
with initial value $a_1^{(1)}=1$ and auxiliary notation $a_0^{(k-1)}=a_{k}^{(k-1)}=0$. This finding implies an efficient simulation algorithm for random vectors $\bm{Z} \sim \mu$. Existing simulation algorithms to date are either restricted by the assumption that $\varphi$ is completely monotone (which is a special case of $d$-monotone in which $\varphi$ is a Laplace transform), or rely on computations of partial derivatives as in \cite[Proposition 5.3]{charpentier14}, which is infeasible for large $d$.
\par
The case $p=1$, in which $V_p \equiv 1$, is well-established, see \cite{mcneil09}. It is convenient to study the dependence structure between components of $\bm{Z}$ in terms of the survival copula of $\bm{Z}$ in that case. The latter equals the distribution function of the random vector $\varphi(\bm{Z})$ and which is called an {Archimedean copula} with {Archimedean generator $\varphi$}, see \cite{mcneil09} for background. The particular choice $\varphi(x)=(1-x)_+^{d-1}$ corresponds to the random vector $\bm{Z}=\bm{U^{(1)}}$, i.e.\ $R \equiv 1$, and this choice minimizes the association between components of $\bm{Z}$. In other words, randomness of the radial variable $R$ increases the strength of association between components of $\bm{Z}$ when compared to a non-random radius. In the general case $p \geq 1$ the situation is analogous and $\bm{Z}=V_p\,(\bm{U}^{(1)})^{\theta}$ is a stochastic representation for the random vector whose association between its components is minimal among all random vectors with $\ell_p$-norm symmetric survival functions. Our exact representation for the law of $V_p$ thus implies a stochastic model for the $\ell_p$-norm symmetric survival function with minimal association between components. Interestingly, the survival copula of $\bm{Z}$ in the general case $p \geq 1$ is also an Archimedean copula, but with Archimedean generator given by $x \mapsto \varphi(x^{\theta})$.
\par
A further application of our findings concerns the case when $\mu$ in (\ref{gen_sf}) is a non-finite Radon measure on $[\bm{0},\bm{\infty}] \setminus\{\bm{0}\}$. The results in \cite{genest18} imply that the function $\varphi$ is $d$-monotone and a bijection on $(0,\infty)$. The same factorization (\ref{factor_ident}) of $\mu$ is valid, only with the probability distribution $F_{\varphi}$ being replaced by a non-finite ``radial'' Radon measure $\nu_{\varphi}$ on $(0,\infty]$ that solely depends on $\varphi$. As already mentioned, our explicit derivation of the probability distribution $F_p$ can be leveraged to derive an exact simulation algorithm for max-infinitely divisible random vectors with exponent measure $\mu$. To the best of our knowledge, such algorithm is unknown to date.
\par
The remainder of this article is organized as follows. Section \ref{sec_prelim} provides background on different concepts of $\ell_p$-norm symmetry in the context of multivariate probability distributions. Section \ref{sec_main} derives the explicit form of the aforementioned distribution $F_p$. Section \ref{sec_sf} presents an efficient simulation algorithm for $F_p$ and thus for arbitrary random vectors with $\ell_p$-norm symmetric survival functions. Section \ref{sec_maxid} proves a stochastic representation for max-infinitely divisible random vectors $\bm{Y}$ on $(0,\infty)^d$ whose exponent measure has $\ell_p$-norm symmetric survival function, and explains how to simulate $\bm{Y}$ exactly from it. 

\section{Background on $\ell_p$-norm symmetry} \label{sec_prelim}
Concerning analytical characterizations of multivariate probability distributions, one can find three prominent notions of $\ell_p$-norm symmetry in the literature: $\ell_p$-norm symmetric densities, $\ell_p$-norm symmetric characteristic functions, and $\ell_p$-norm symmetric survival functions. In order to classify the contributions of the present article, we provide a concise survey of what is known about stochastic representations related to these concepts.
\par
First of all, an absolutely continuous random vector $\bm{X}$ has an $\ell_p$-norm symmetric density $f(\bm{x})=g(\norm{\bm{x}}_p)$ if and only if   
\begin{gather}
\bm{X} \sim R\,\bm{U}^{(p)},
\label{stoch_ellsymchar}
\end{gather}
where $R$ is a positive (absolutely continuous) random variable and $\bm{U}^{(p)}$ is an independent random vector that is uniformly distributed on the $\ell_p$-sphere, see Lemma \ref{lemma_lpcharsym} in the Appendix. Since $\norm{.}_p$ is orthant-monotonic, this statement holds either for $\bm{X}$ taking values in $\IR^d$ or only $(0,\infty)^d$, in which case $\bm{U}^{(p)}$ is uniform on the restriction of the $\ell_p$-sphere to the positive orthant (the $\ell_p$-simplex). A stochastic representation for $\bm{U}^{(p)}$ restricted to the positive orthant can be found in \cite{rachev91} and is given by $\bm{U}^{(p)} \sim \bm{\xi}^{(p)}/\norm{\bm{\xi}^{(p)}}_p$, where $\bm{\xi}^{(p)}=(\xi^{(p)}_1,\ldots,\xi^{(p)}_d)^T$ is a vector with iid components satisfying $(\xi_1^{(p)})^{p} \sim \Gamma(\theta,\theta)$, where we denote by $\Gamma(\beta,\eta)$ the Gamma distribution with density proportional to $e^{-\eta\,x}\,x^{\beta-1}$. 
\par
When considering analytical characterizations other than multivariate densities, the stochastic model (\ref{stoch_ellsymchar}) is replaced by a more complicated stochastic representation, except for the following two well-known special cases $p \in \{1,2\}$:
\begin{itemize}
\item[(i)] The characteristic function of $\bm{X}$ is $\ell_2$-norm symmetric if and only if $\bm{X} \sim R\,\bm{U}^{(2)}$ for arbitrary $R>0$ independent of the random vector $\bm{U}^{(2)}$.
\item[(ii)] The survival function of $\bm{X}$ is $\ell_1$-norm symmetric if and only if $\bm{X} \sim R\,\bm{U}^{(1)}$ for arbitrary $R>0$ independent of the random vector $\bm{U}^{(1)}$.
\end{itemize}
When generalizing these statements to the case of more general $p \neq 1$, the stochastic representations become more involved than (\ref{stoch_ellsymchar}), or are even unknown. In the present article we are concerned with a generalization of statement (ii) to the general case $p \geq 1$, whereas generalizations of statement (i) are beyond the scope of the present work. For background on (i) we refer the interested readers to the references in Remark \ref{rmk_char} below, which provides a short survey on the topic.
\par
Concerning the generalization of statement (ii), it follows from results in \cite{charpentier14,mcneil09}, this logic being explained in (\ref{gen_ellpsurvfunc}) below, that a random vector $\bm{Z}$ taking values in $(0,\infty)^{d}$ has $\ell_p$-norm symmetric survival function $\IP(\bm{Z}>\bm{z})=\varphi \circ \norm{\bm{z}}_p$ if and only if 
\begin{gather}
\bm{Z} \sim R\,V_{p}\,\big(\bm{U}^{(1)}\big)^{\theta},
\label{stoch_ellsymsf}
\end{gather}
where $R$ is a positive random variable uniquely determined in law by its so-called {Williamson-$d$-transform} (see \cite{williamson56}) 
\begin{gather*}
\IE\Big[\Big( 1-\frac{x}{R}\Big)^{d-1}_+ \Big] = \varphi(x),\quad x \geq 0,
\end{gather*}
$\bm{U}^{(1)}$ is uniform on $S_{d,1}$, and $V_{p}$ is a random variable taking values in $[0,1]$ whose probability law is not explicitly known to date, all three objects mutually independent. Provided absolute continuity of the radial variable $R$, the two representations (\ref{stoch_ellsymchar}) and (\ref{stoch_ellsymsf}) imply that the notions of $\ell_p$-norm symmetric densities and $\ell_p$-norm symmetric survival functions are equivalent if and only if $p=1$, and $V_1 \equiv 1$ in that case. The survival copula of $\bm{X}$, respectively $\bm{Z}$, in this case $p=1$, which equals the distribution function of the random vector $\varphi(\bm{X})$, is an Archimedean copula, and is given by
\begin{gather*}
C_{\varphi}(\bm{u}) = \varphi[ \varphi^{-1}(u_1)+\ldots+\varphi^{-1}(u_d)],\quad \bm{u} \in [\bm{0},\bm{1}],
\end{gather*}
and the function $\varphi$ is called {Archimedean generator}, see \cite[Chapter 2]{mai17} for background on the matter. In the general case $p \geq 1$, we see that $\bm{Z}^{p} \sim R^{p}\,V_p^{p}\,\bm{U}^{(1)}$ is a particular instance of an $\ell_1$-norm symmetric distribution (in both considered meanings). The survival copula of the vector $\bm{Z}$ in (\ref{stoch_ellsymsf}) is an Archimedean copula with Archimedean generator of the form $x \mapsto \varphi(x^{\theta})$, and these are sometimes referred to as {outer power Archimedean copulas}. This parameter-enhancement technique, introducing the power $\theta$ inside the argument of $\varphi$, has originally been introduced in \cite{oakes94}. The nomenclature ``outer power'' might appear surprising, since the power $\theta$ is taken ``inside'' $\varphi$, but is explained from traditional notation in the context of Archimedean copulas, where the roles of $\varphi$ and $\varphi^{-1}$ are often interchanged.
\par
Now let us briefly discuss the strength of association between components of $\bm{X}$ in (\ref{stoch_ellsymchar}), respectively $\bm{Z}$ in (\ref{stoch_ellsymsf}). Considering $\bm{X}$ in (\ref{stoch_ellsymchar}), for non-random $R$ the association between the components of $\bm{X}$ is minimal and negative; indeed, in this case, $\bm X^p$ is a joint mix \cite{WW16} and hence it represents a form of extreme negative dependence \cite{PW15}. Furthermore, it is explained in \cite{rachev91} that if the radial variable satisfies $R \sim M\,\norm{\bm{\xi}^{(p)}}_p$ with a positive random variable $M$ independent of $\bm{\xi}^{(p)}$, then $\bm{X} \sim M\,\bm{\xi}^{(p)}$ with $M$ and $\bm{\xi}^{(p)}$ independent. Intuitively, the denominator in $\bm{U}^{(p)} \sim \bm{\xi}^{(p)}/\norm{\bm{\xi}^{(p)}}_p$ ``cancels out in distribution'' in this case, relying on the Lukacs theorem, and the components exhibit positive association (recall that $\xi^{(p)}_1,\dots,\xi^{(p)}_d$ are iid) whose strength depends on $M$. Regarding the strength of dependence between components of $\bm{Z}$ in (\ref{stoch_ellsymsf}), a convenient measurement is Kendall's tau between a pair of two components of $\bm{Z}$, see \cite[p.\ 28--30]{mai17} for a motivation. We recall that {Kendall's tau} for the bivariate random vector $(Z_1,Z_2)$ is given by the probability of concordance minus the probability of discordance, that is
\begin{gather*}
\IP[(Z_1-\tilde{Z_1})\,(Z_2-\tilde{Z}_2)>0]-\IP[(Z_1-\tilde{Z_1})\,(Z_2-\tilde{Z}_2)<0] ,
\end{gather*}
where $(\tilde{Z}_1,\tilde{Z}_2)$ is an independent copy of $(Z_1,Z_2)$. It follows from the results in \cite{charpentier14} that Kendall's {tau} between two components of $\bm{Z}$, without loss of generality $(Z_1,Z_2)$ by exchangeability of $\bm{Z}$, is given by $1-\theta+\theta\,\tau_{\varphi}$, where $\tau_{\varphi}$ denotes Kendall's {tau} between two components of a random vector with Archimedean copula $C_{\varphi}$ as distribution function. 
It is known from results in \cite{mcneil09} that Kendall's {tau} $\tau_{\varphi}$ is minimized with the choice $\varphi(x)=(1-x)^{d-1}_+$ corresponding to $R \equiv 1$, with $\tau_{\varphi}=-1/(2\,d-3)$. For $p<(2\,d-2)/(2\,d-3)$ this implies negative association between the components of $\bm{Z}$ and we obtain a similar intuition as in the case of an $\ell_p$-norm symmetric density. In particular, two components of the random vector $\bm{Z}=V_p\,(\bm{U}^{(1)})^{\theta}$ have minimal Kendall's tau $1-\theta\,(2\,d-2)/(2\,d-3)$ among all $d$-dimensional random vectors with $\ell_p$-norm symmetric survival function. Furthermore, it is known that if $R \sim M\,\big(\norm{\bm{\xi}^{(1)}}_1\big)^{\theta}$ for some positive random variable $M$ independent of $\bm{\xi}^{(1)}$ as defined above, then $\bm{Z} \sim M\,\big(\bm{\xi}^{(1)}\big)^{\theta}$ and the components of $\bm{Z}$ exhibit positive association whose strength is governed by the choice of $M$. 
\par
Our main contribution is an explicit representation for the random variable $V_p$. It can be inferred from the results in \cite{mcneil09} that the random variable $V_{p}$ is uniquely determined by the identity
\begin{gather}
\IE\Big[\Big( 1-\frac{x}{V^p_p}\Big)^{d-1}_+ \Big] = \big( 1-x^{\theta}\big)^{d-1}_+,\quad x \geq 0.
\label{willi_V}
\end{gather}
Unfortunately, this Williamson-$d$-transform is not easy to invert to obtain the explicit law of $V^p_p$, hence $V_p$. We prove that $V_p$ equals a finite mixture of certain beta distributions and an atom at unity and derive an efficient simulation algorithm. This not only implies an efficient simulation algorithm for the random vector $\bm{Z}$ in (\ref{stoch_ellsymsf}), but also we show in Section \ref{sec_maxid} how it can be leveraged to obtain an exact simulation algorithm for max-infinitely divisible random vectors $\bm{Y}$ on $(0,\infty)^d$ whose exponent measure $\mu$ has $\ell_p$-norm symmetric survival function given by (\ref{gen_sf}). An excellent textbook account on max-infinite divisibility is \cite{resnick87}. Such $\bm{Y}$ is shown in Lemma \ref{lemma_rAC} below to have the stochastic representation
\begin{gather}
\bm{Y} \sim \Big( \max_{k \geq 1}\big\{ \eta_k\,Z_1^{(k)} \big\},\ldots, \max_{k \geq 1}\big\{ \eta_k\,Z_d^{(k)} \big\}\Big),
\label{stoch_maxid}
\end{gather}
where $\{\bm{Z}^{(k)}\}_{k \geq 1}$ is a sequence of iid copies of $\bm{Z}$ in (\ref{stoch_ellsymsf}) with $R \equiv 1$ and, independently, $\{\eta_k\}_{k \geq 1}$ denoting the decreasing enumeration of the points of a Poisson random measure, whose mean measure $\nu = \nu_{\varphi}$ is Radon on $(0,\infty]$ satisfying $\nu_{\varphi}(\{\infty\})=0$ and $\nu_{\varphi}((0,\infty])=\infty$. Since our main result implies an exact simulation algorithm for the involved $\bm{Z}^{(k)}$, this stochastic representation serves as basis to derive an exact simulation algorithm for $\bm{Y}$. Its idea enhances an algorithm viable for the case $p=1$ that was presented in \cite{mai18}. The copula of $\bm{Y}$ is called a {reciprocal Archimedean copula} with generator $x \mapsto \varphi(x^{\theta})$ in \cite{genest18}. This nomenclature is justified by some ``reciprocal'' analogies with Archimedean copulas, e.g., like Archimedean copulas also reciprocal Archimedean copulas can be written in terms of their generating function $\varphi$. Our algorithm shows how to simulate reciprocal Archimedan copulas whose generator is given by $y \mapsto \varphi(y^{\theta})$. In analogy to the aforementioned Archimedean case, we refer to the copula of $\bm{Y}$ as {outer power reciprocal Archimedean copula}. 

\begin{remark}[$\ell_p$-norm symmetric characteristic functions]\label{rmk_char}
Characteristic functions that are $\ell_2$-norm symmetric are popular in geostatistics. For instance, it is pointed out in \cite{gneiting99} that when the density of $R$ in (\ref{stoch_ellsymchar})  with $p=2$ is proportional to $J^2_{d/2}$, the square of a Bessel function, the resulting $\ell_2$-norm symmetric characteristic function (or density) is called {Euclid's hat}, and scale mixtures thereof constitute an important model in geostatistics. It is known that $\ell_p$-norm symmetric characteristic functions require the restriction $p \leq 2$, but an explicit stochastic representation for $p \neq 2$ is only known for $p=1$ due to \cite{cambanis83}, we refer the interested reader to \cite{gneiting98} for open questions in this regard and further background on the matter. 
\end{remark}

\section{Explicit representation for the law of $V_p$}\label{sec_main}

Concerning notation, we denote by $\beta_{m,n}$ for $m,n \geq 1$ the cdf of a beta distribution with density proportional to $x^{m-1}\,(1-x)^{n-1}$. For the sake of a convenient notation, we further denote by $\beta_{m,0}(x)=1_{\{x \geq 1\}}$ the cdf of a random variable that is identically constant equal to one, for $m \geq 1$ arbitrary.
\par
Our goal is to find the random variable $V_p$ satisfying (\ref{willi_V}). The solution will be given in Theorem \ref{thm_sol} below, where it is shown that $V_p$ is a (convex) mixture of certain beta distributions. Before presenting it, some auxiliary steps are carried out. First of all, for the sake of completeness, we formally prove that $V_p$ satisfying (\ref{willi_V}) exists and is unique in law. A result of \cite{williamson56}, lying at the heart of the results in \cite{mcneil09}, shows that functions $\varphi:[0,\infty) \rightarrow [0,1]$ which are $d$-monotone on $(0,\infty)$ and satisfy $\varphi(0)=1$ form a simplex with extremal boundary given by the functions $\varphi_v(x):=(1-x/v)^{d-1}_+$, $v>0$. In intuitive terms, this means that these functions form a compact convex set, and each element in this set has a unique representation as an ``integral average'' over functions in the boundary of the set. In probabilistic terms, it means that for any such function $\varphi$ there is a random variable $V_{\varphi}$, uniquely determined in distribution, such that $\varphi(x)=\IE[\varphi_{V_{\varphi}}(x)]$, $x \geq 0$. Applied to our situation, in order to formally prove that $V_p$ exists and its law is unique it is sufficient to verify that $x \mapsto (1-x^{\theta})^{d-1}_+$ is $d$-monotone.

\begin{lemma}[$x \mapsto (1-x^{\theta})^{d-1}_+$ is $d$-monotone]
The function $x \mapsto (1-x^{\theta})^{d-1}_+$ is $d$-monotone on $(0,\infty)$.
\end{lemma}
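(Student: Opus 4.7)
The plan is to split the domain at $x=1$ and then use Faà di Bruno to track signs of derivatives. For $x \geq 1$ the function vanishes identically, so every derivative and every monotonicity/convexity condition is trivially satisfied there. The real work is on $(0,1)$, where I would write $f(x) := (1-x^\theta)^{d-1}_+$ as the composition $f = g \circ h$ with the ``outer'' function $g(y) = (1-y)^{d-1}$ and the ``inner'' function $h(x) = x^\theta$.

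The key observation is that on $[0,1)$ the derivative $g^{(j)}(y) = (-1)^j (d-1)!/(d-1-j)!\,(1-y)^{d-1-j}$ has sign exactly $(-1)^j$, while for $\theta \in (0,1)$ the derivative $h^{(m)}(x) = \theta(\theta-1)\cdots(\theta-m+1)\,x^{\theta-m}$ has sign exactly $(-1)^{m-1}$, since $\theta>0$ and each factor $(\theta-i)$ for $i\geq 1$ is negative. (The case $\theta=1$ is immediate because $f$ is then a polynomial and the claim is classical.) Faà di Bruno's formula expands $f^{(k)}(x)$ as a sum over set partitions $\pi$ of $\{1,\dots,k\}$, where the summand indexed by $\pi$ has the form $g^{(|\pi|)}(h(x)) \prod_{B\in\pi} h^{(|B|)}(x)$. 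Setting $j = |\pi|$, the sign contribution is $(-1)^j \cdot \prod_{B\in\pi}(-1)^{|B|-1} = (-1)^j \cdot (-1)^{k-j} = (-1)^k$, uniformly across partitions. Hence every term carries the same sign, and I obtain $(-1)^k f^{(k)}(x) \geq 0$ on $(0,1)$ for all $k=0,1,\ldots,d-1$ (indeed for all $k$, until $g^{(j)}$ eventually vanishes for $j\geq d$).

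With the sign pattern in hand, the three required properties of $(-1)^k f^{(k)}$ for $k=0,\ldots,d-2$ follow almost automatically. Non-negativity is the sign statement itself. Non-increasing on $(0,1)$ follows by differentiating once: the derivative is $(-1)^k f^{(k+1)} = -[(-1)^{k+1} f^{(k+1)}] \leq 0$. Convexity on $(0,1)$ follows similarly from $(-1)^k f^{(k+2)} = (-1)^{k+2} f^{(k+2)} \geq 0$, applied to $k+2 \leq d$ where the Faà di Bruno sign argument still applies.

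The only subtlety will be matching things across the corner at $x=1$. Here I would use the local expansion $1-x^\theta = \theta(1-x) + O((1-x)^2)$, so that $f$ behaves like $\theta^{d-1}(1-x)^{d-1}$ as $x\uparrow 1$; thus $f^{(k)}(1^-)=0$ for $k=0,\ldots,d-2$, which glues continuously to the identically zero branch on $(1,\infty)$ and ensures $(-1)^k f^{(k)}$ is globally continuous, non-negative, and non-increasing. For convexity, the first derivative of $(-1)^k f^{(k)}$ equals a non-positive quantity on $(0,1)$ and $0$ on $(1,\infty)$, so it jumps upward (or continuously tends to $0$) at $x=1$ and is non-decreasing on each piece, hence non-decreasing globally, which is equivalent to convexity on $(0,\infty)$. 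This boundary bookkeeping is the only place where one has to be careful; the sign tracking via Faà di Bruno is the real engine of the proof.
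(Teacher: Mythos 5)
Your proof is correct, but it reaches the key step by a different route than the paper. Both arguments decompose $x \mapsto (1-x^{\theta})^{d-1}_+$ on $(0,1)$ as an outer power function composed with an inner power function, and both finish identically by observing that the derivatives up to order $d-2$ vanish at $x=1$, so the smooth branch on $(0,1)$ glues to the zero branch on $[1,\infty)$ into a globally $d$-monotone function. The difference is the engine on $(0,1)$: the paper outsources the composition step to the closure theorem of \cite[Theorem 12]{ressel14} (compositions of $d$-$\uparrow$ functions are $d$-$\uparrow$), applied to $f(x)=(1+x)^{d-1}_+$ and $g(x)=-(-x)^{\theta}$ on $(-1,0)$, whereas you track signs directly through Fa\`a di Bruno's formula, using that $g^{(j)}$ carries sign $(-1)^j$ (vanishing for $j\ge d$) and $h^{(m)}$ carries sign $(-1)^{m-1}$ for $\theta\in(0,1)$, so that every partition contributes the uniform sign $(-1)^k$ to $f^{(k)}$. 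Your version is self-contained and elementary, needs no external reference, and in fact yields the slightly stronger conclusion that \emph{all} derivatives of the restriction to $(0,1)$ alternate in sign; the paper's version is shorter at the cost of a citation. Your boundary bookkeeping at $x=1$ (the order-$(d-1)$ zero of $1-x^{\theta}$ forcing $f^{(k)}(1^-)=0$ for $k\le d-2$, plus the one-sided derivative comparison to get convexity across the corner) is exactly the part that needs care, matches the paper's treatment, and is handled correctly.
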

\begin{proof}
Denote $\varphi(x) = (1-x^{\theta})^{d-1}_+$ within this proof. We apply \cite[Theorem 12]{ressel14}, which states that $f \circ g$ is $d$-$\uparrow$ if both $f$ and $g$ are. Applying this statement with $f(x)=(1+x)^{d-1}_+$ and $g(x)=-(-x)^{\theta}$ on $(-1,0)$, which are both easily seen to be $d$-$\uparrow$, then implies that $\varphi=f \circ g(-.)$ is $d$-monotone on $(0,1)$. Since $\varphi^{(k)}$ is identically zero on $[1,\infty)$ and $\varphi^{(k)}(1)=0$ for $k\in \{0,\ldots,d-2\}$, we obtain that $\varphi^{(k)}$ is actually convex on all of $(0,\infty)$, hence $\varphi$ is $d$-monotone on $(0,\infty)$.
\end{proof}
By definition of $d$-monotonicity we also know that $x \mapsto (1-x^{\theta})^{d-1}_+$ is $k$-monotone on $(0,\infty)$ for each $k=1,\ldots,d$. Consequently, for each $k=1,\ldots,d$ there exists a positive random variable $V_d^{(k)}$, which is unique in law, such that
\begin{gather*}
\big(1-x^{\theta}\big)^{d-1}_+ = \IE\Big[ \Big(1-\frac{x}{\big(V_d^{(k)}\big)^p} \Big)^{k-1}_+\Big],\quad x >0.
\end{gather*}
Our goal is to determine the probability law of $V_p=V_d^{(d)}$, in fact we even determine the law of all $V_d^{(k)}$ for $k=1,\ldots,d$ in the following. We denote the cdf of $V_d^{(k)}$ by $F_d^k$ and, as a first step, we derive a recursion for $F_d^k$. To this end, we note that for $k=1,\dots,d$, $F_d^k$ is the unique distribution which satisfies the equation 
 \begin{align} 
\label{eq:recur1}
  \int^{1}_{c^{\theta}} \left(1- \frac{c}{x^p}  \right)^{k-1} \d F^{k}_{d}(x)    =  (1-c^{\theta})^{d-1},~~~\mbox{for $c\in [0,1]$.}
 \end{align}
 
\begin{lemma}[A recursion for $F_d^k$]\label{lemma_rec}
Let the finite variation functions {$F_d^k$} be given by the following recursive formulas: $F_d^1 = \beta_{1,d-1}$, and for $k \in\{ 2,3,\dots,d\}$,
 \begin{align} 
\label{eq:recur2}
F^{k}_d = \frac{d-1}{k-1} \, \theta\,  F^{k-1}_{d-1} + \left(1-\frac{d-1 }{k-1}\,\theta \right)    F^{k-1}_{d}. \end{align} 
Then, $F_d^k$ satisfies \eqref{eq:recur1}.
\end{lemma}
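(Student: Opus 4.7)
The plan is induction on $k \in \{1, \ldots, d\}$, verifying directly that the (possibly signed) measure $F_d^k$ produced by the recursion satisfies \eqref{eq:recur1}.

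For the \textbf{base case} $k = 1$, since $F_d^1 = \beta_{1,d-1}$ has cdf $1 - (1-x)^{d-1}$, the identity $\int_{c^{\theta}}^1 \mathrm{d}F_d^1(x) = (1-c^{\theta})^{d-1}$ holds by direct evaluation.

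For the \textbf{inductive step} $k \geq 2$, assume that for $m \in \{d-1, d\}$ one has $\int_{c^{\theta}}^1 (1-c/x^p)^{k-2}\,\mathrm{d}F_m^{k-1}(x) = (1-c^{\theta})^{m-1}$, and set $I_m(c) := \int_{c^{\theta}}^1 (1-c/x^p)^{k-1}\,\mathrm{d}F_m^{k-1}(x)$. Using the splitting $(1-c/x^p)^{k-1} = (1-c/x^p)^{k-2} - (c/x^p)(1-c/x^p)^{k-2}$ together with the identity $(1-c/x^p)^{k-2}/x^p = -(k-1)^{-1}\partial_c(1-c/x^p)^{k-1}$, the Leibniz rule (whose boundary term at $x = c^{\theta}$ vanishes for $k \geq 2$), and the induction hypothesis, I obtain the first-order linear ODE $I_m(c) = (1-c^{\theta})^{m-1} + \frac{c}{k-1}\, I_m'(c)$ with boundary value $I_m(1) = 0$. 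Applying the integrating factor $c^{-(k-1)}$ and one further integration by parts in $s$ yields the closed form $I_m(c) = (1-c^{\theta})^{m-1} - (m-1)\theta\, c^{k-1}\, T_m(c)$, where $T_m(c) := \int_c^1 s^{\theta-k}(1-s^{\theta})^{m-2}\,\mathrm{d}s$.

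Plugging this into the recursion \eqref{eq:recur2}, the non-$T_m$ terms in $\tfrac{(d-1)\theta}{k-1}\, I_{d-1}(c) + \bigl(1 - \tfrac{(d-1)\theta}{k-1}\bigr)\, I_d(c)$ collapse algebraically to $(1-c^{\theta})^{d-1} + \tfrac{(d-1)\theta}{k-1}\, c^{\theta}\,(1-c^{\theta})^{d-2}$, and the $T_m$ contributions organize into a multiple of $(d-2)\theta\, T_{d-1}(c) + (k-1-(d-1)\theta)\, T_d(c)$. Hence \eqref{eq:recur1} reduces to the single identity
$(d-2)\theta\, T_{d-1}(c) + (k-1-(d-1)\theta)\, T_d(c) = c^{\theta-k+1}(1-c^{\theta})^{d-2}$,
which I verify by integrating $\tfrac{\mathrm{d}}{\mathrm{d}s}\bigl[s^{\theta-k+1}(1-s^{\theta})^{d-2}\bigr]$ over $[c,1]$ and then using the split $s^{2\theta-k}(1-s^{\theta})^{d-3} = s^{\theta-k}\bigl[(1-s^{\theta})^{d-3} - (1-s^{\theta})^{d-2}\bigr]$ to regroup the resulting integrals into $T_{d-1}$ and $T_d$.

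The \textbf{main obstacle} is the transition from the induction hypothesis, which controls only $\int(1-c/x^p)^{k-2}\,\mathrm{d}F_m^{k-1}$, to the higher-power integral $I_m(c)$ required on the right-hand side of the recursion. The ODE derivation bridges exactly this one-degree gap; once $I_m$ is available in closed form, the remaining verification is bookkeeping together with one clean application of the product rule.
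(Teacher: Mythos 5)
Your argument shares the paper's central idea -- differentiate the integral in $c$, use the split $c/x^p = 1-(1-c/x^p)$ to drop the exponent by one, and invoke the induction hypothesis on the $(k-2)$-power integrals -- but you execute it differently. The paper applies the recursion to rewrite $\d F_d^k$ inside the integral, obtains the first-order linear ODE $F'=G'+\tfrac{k-1}{c}(F-G)$ for the \emph{difference} of the two sides of \eqref{eq:recur1}, and concludes $F\equiv G$ from the boundary condition $F(1)=G(1)=0$ by uniqueness; no explicit solution is ever needed. You instead solve the analogous ODE for each $I_m$ in closed form and then verify the recursion by direct substitution, which reduces everything to the integral identity $(d-2)\theta\,T_{d-1}(c)+(k-1-(d-1)\theta)\,T_d(c)=c^{\theta-k+1}(1-c^{\theta})^{d-2}$. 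I checked your closed form $I_m(c)=(1-c^{\theta})^{m-1}-(m-1)\theta\,c^{k-1}T_m(c)$, the collapse of the non-$T_m$ terms, and the final identity via $\int_c^1\frac{\d}{\d s}[s^{\theta-k+1}(1-s^{\theta})^{d-2}]\,\d s$; all are correct for $d\ge 3$. The trade-off is that the paper's uniqueness argument is shorter and uniform in $d$, while your route produces explicit formulas for the intermediate integrals as a by-product.

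There is, however, a genuine (if small) gap at $d=2$, i.e.\ when $m=d-1=1$ occurs. Your integration by parts uses the vanishing of $(1-s^{\theta})^{m-1}$ at $s=1$, which fails for $m=1$, and $T_1(c)=\int_c^1 s^{\theta-k}(1-s^{\theta})^{-1}\,\d s$ diverges, so the expression $(m-1)\theta\,c^{k-1}T_m(c)$ is a $0\cdot\infty$ form; read literally your formula gives $I_1(c)=1$, whereas the true value (from the un-integrated solution $I_m(c)=(k-1)c^{k-1}\int_c^1 s^{-k}(1-s^{\theta})^{m-1}\,\d s$) is $I_1(c)=1-c^{k-1}$. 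Likewise the boundary term $(1-s^{\theta})^{d-2}\big|_{s=1}$ in your final identity equals $1$ rather than $0$ when $d=2$, and indeed $(1-\theta)T_2(c)=c^{\theta-1}-1\neq c^{\theta-1}$. The case $d=2$ (forcing $k=2$) is easily checked directly -- $\theta I_1(c)+(1-\theta)I_2(c)=1-c^{\theta}$ by elementary integration -- but as written your proof does not cover it; either add that verification or stop at the pre-integration-by-parts form of $I_m$, which is valid for all $m\ge 1$.
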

\begin{proof} 
We know $\beta_{1,d-1}(x)=1-(1-x)^{d-1}$, which implies that
\begin{gather*}
 \int^{1}_{c^{\theta}} \left(1- \frac{c}{x^p}  \right)^{0} \d \beta_{1,d-1}(x) =  \int^{1}_{c^{\theta}}\, \d \beta_{1,d-1}(x) = (1-c^{\theta})^{d-1},
\end{gather*}
as claimed for $k=1$. Regarding the induction step, for $k \geq 2$ define two functions $F(c) :=  \int^{1}_0   \left(1- \frac{c}{x^p}  \right)_+^{k-1} \d F^{k}_{d}(x) $ and $G(c) := (1-c^{\theta} )^{d-1}$ for $c\in [0,1]$. 
  Note that 
  $$
  F^{'}(c) =   \frac{\d }{\d c}\int_{c^{\theta}}^{1}   \left(1- \frac{c}{x^p}  \right)^{k-1} \d F^{k}_{d}(x)= -(k-1) \int_{c^{\theta}}^{1}  \frac{1}{x^p}   \left(1- \frac{c}{x^p}  \right)^{k-2} \d F^k_d(x).
  $$
Moreover, using \eqref{eq:recur2}, 
\begin{align*}
c\,F^{'}(c) &= -(k-1) \int_{c^{\theta}}^1  \frac{c}{x^p}   \left(1- \frac{c}{x^p}  \right)^{k-2} \d F^k_d(x) =-(k-1)  \left[\int_{c^{\theta}}^1    \left(1- \frac{c}{x^p}   \right)_+^{k-2} \d F^k_d(x)  -  F(c) \right] \\
&=   -\int_{c^{\theta}}^1    \left(1-\frac{c}{x^p}   \right)^{k-2} \left( (d-1)\,\theta \d F^{k-1}_{d-1}(x)   + [k-1- (d-1)\,\theta]\, \d F^{k-1}_d(x)\right)  +  (k-1) F(c) \\
&= -(d-1)\,\theta\,(1-c^{\theta} )^{d-2}   -  [k-1- (d-1)\theta]\, (1-c^{\theta} )^{d-1}+  (k-1) F(c) \\ 
&=-(d-1)\,  \theta\,  (1-c^{\theta} )^{d-2}\, c^{\theta} + (k-1)\,[ F(c)-G(c) ].
\end{align*} 
 We know $F(0)=G(0)$. If $c>0$, we divide both sides of the above equality by $c$, and  get
 $$
 F'(c) =G'(c) + \frac{k-1}{c}\,[F(c)-G(c) ].
 $$
 Since $F(1)=G(1)=0$, we know that $F \equiv G$ on $[0,1]$. Thus, \eqref{eq:recur1} holds. 
\end{proof}

The term $ \theta\,(d-1)/(k-1)$  in  \eqref{eq:recur2} may be greater than one, so that we do not obtain convex combinations of beta distributions directly. Indeed, if this term is no larger than one (i.e., $p\ge d-1$), then applying \eqref{eq:recur2} repeatedly gives rise  to $F_d^d$ as a mixture of $\beta_{1,k}$ for $k\in \{1,\dots,d-1\}$. In general, this is not the case: we will see that $F_d^d$ is a mixture of beta distributions, but not all of the form  $\beta_{1,k}$ for $k \in \{1,\dots,d-1\}$. The following auxiliary lemma is helpful to solve the recursion in \eqref{eq:recur2}.

\begin{lemma}[Auxiliary identities on the beta distribution]\label{lemma_ident}
The following two identities hold for the beta distribution, for integers $m,n \geq 1$:
\begin{align*}
\beta_{m+1,n-1}(x) - \beta_{m,n}(x) = -\binom{m+n-1}{m}\,x^m\,(1-x)^{n-1},\quad \beta_{m,n-1}(x)-\beta_{m,n}(x) =-\binom{m+n-2}{m-1}\,x^m\,(1-x)^{n-1}.
\end{align*}
\end{lemma}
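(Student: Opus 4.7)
The plan is to prove both identities by elementary manipulations of the incomplete beta integral $B(m,n)\,\beta_{m,n}(x)=\int_0^x t^{m-1}(1-t)^{n-1}\,\d t$, combined with the explicit formula $B(m,n)=(m-1)!(n-1)!/(m+n-1)!$ for the complete beta function. Since both sides of each identity vanish at $x=0$, an equivalent route would be to differentiate both sides and verify agreement of the derivatives on $(0,1)$, but integration by parts gives notationally cleaner bookkeeping.

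For the first identity, I would integrate by parts with $u=(1-t)^{n-1}$ and $\d v=t^{m-1}\,\d t$, obtaining
\[
\int_0^x t^{m-1}(1-t)^{n-1}\,\d t \;=\; \frac{x^m(1-x)^{n-1}}{m} \;+\; \frac{n-1}{m}\int_0^x t^{m}(1-t)^{n-2}\,\d t.
\]
The remaining integral equals $B(m+1,n-1)\,\beta_{m+1,n-1}(x)$, and a direct factorial calculation shows $\tfrac{n-1}{m}B(m+1,n-1)=B(m,n)$. Dividing through by $B(m,n)$ and recognizing $\tfrac{1}{m\,B(m,n)}=\binom{m+n-1}{m}$ yields the first identity after sign rearrangement.

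For the second identity, I would first split $(1-t)^{n-1}=(1-t)^{n-2}-t\,(1-t)^{n-2}$ inside the defining integral of $\beta_{m,n}$, producing the three-term relation
\[
B(m,n)\,\beta_{m,n}(x) \;=\; B(m,n-1)\,\beta_{m,n-1}(x) \;-\; B(m+1,n-1)\,\beta_{m+1,n-1}(x).
\]
Substituting the already-established first identity to eliminate $\beta_{m+1,n-1}(x)$, collecting $\beta_{m,n}(x)$ on the left using the elementary equality $B(m,n)+B(m+1,n-1)=B(m,n-1)$, and simplifying the resulting prefactor via $\tfrac{m}{m+n-1}\binom{m+n-1}{m}=\binom{m+n-2}{m-1}$, one recovers the second identity.

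No conceptual obstacle arises; the only work is careful bookkeeping with factorials and binomial coefficients. The integration by parts formally requires $n\geq 2$, but the boundary case $n=1$ reduces under the paper's convention $\beta_{m,0}(x)=1_{\{x\geq 1\}}$ and $\beta_{m,1}(x)=x^m$ to an elementary direct check on $(0,1)$.
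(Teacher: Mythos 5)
Your proof is correct, and its overall architecture matches the paper's: both establish the first identity by an integration-by-parts mechanism relating $\beta_{m,n}$ to $\beta_{m+1,n-1}$, and both derive the second identity from the first via the same three-term relation with prefactor $m/(m+n-1)$. The one genuine difference is in the first identity: the paper first proves the closed-form partial sum $\beta_{m,n}(x)=(m+n-1)!\,\sum_{k=0}^{n-1}x^{m+k}(1-x)^{n-1-k}/\big((m+k)!\,(n-1-k)!\big)$ by iterated integration by parts and then reads the identity off by telescoping, whereas you stop after a single integration-by-parts step and divide by $B(m,n)$; your version is marginally more direct and avoids carrying the auxiliary series. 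Your handling of the second identity (splitting $(1-t)^{n-1}=(1-t)^{n-2}-t(1-t)^{n-2}$, using $B(m,n)+B(m+1,n-1)=B(m,n-1)$, then substituting the first identity) is arithmetically equivalent to the paper's computation. Your closing remark on the boundary case $n=1$, where $\beta_{m,0}=1_{\{x\geq 1\}}$ and the identities hold on $(0,1)$ but not at $x=1$, is a point of care the paper passes over silently; it is worth noting since this degenerate case does occur in the application within Theorem \ref{thm_sol}.
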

\begin{proof}
The straightforward proof is sketched in the Appendix.
\end{proof}

\begin{theorem}[Solving the recursion]\label{thm_sol}
For each $k \in \{1,\ldots,d\}$, there exists $(a^{(k)}_1,\ldots,a^{(k)}_k) \in S_{k,1}$ such that
\begin{gather*}
F_d^{k} = \sum_{i=1}^{k}a^{(k)}_i\,\beta_{k+1-i,d-k-1+i}.
\end{gather*}
Furthermore, the $a_i^{(k)}$ satisfy the recursive relationship (\ref{eq:recur-a}).
\end{theorem}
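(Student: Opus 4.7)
The plan is to prove both claims simultaneously by induction on $k$ (with the representation holding for all $d \geq k$ and the coefficients $a_i^{(k)}$ depending only on $k$ and $\theta$). For the base case $k=1$, Lemma \ref{lemma_rec} already gives $F_d^1 = \beta_{1, d-1}$ for every $d$, matching the claimed form with $a_1^{(1)} = 1$. For the inductive step at $k \ge 2$, I would invoke the inductive hypothesis simultaneously at $d$ and at $d-1$ to write
\begin{gather*}
F_d^{k-1} = \sum_{i=1}^{k-1} a_i^{(k-1)}\,\beta_{k-i,\,d-k+i},\qquad F_{d-1}^{k-1} = \sum_{i=1}^{k-1} a_i^{(k-1)}\,\beta_{k-i,\,d-k-1+i},
\end{gather*}
and substitute these into the recursion \eqref{eq:recur2} of Lemma \ref{lemma_rec}.

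The key algebraic ingredient is to bring every beta summand to the common total $m+n=d$. The summands of $F_d^{k-1}$ already have total $d$, while those of $F_{d-1}^{k-1}$ have total $d-1$. To reconcile the totals, I would derive from Lemma \ref{lemma_ident} the identity
\begin{gather*}
\beta_{m,\,n-1} \;=\; \frac{n-1}{m+n-1}\,\beta_{m,n} \;+\; \frac{m}{m+n-1}\,\beta_{m+1,\,n-1},
\end{gather*}
obtained by solving the two equalities of Lemma \ref{lemma_ident} for $\beta_{m,n-1}$ after observing that the ratio of the two binomial coefficients equals $m/(m+n-1)$. Applying this with $(m,n) = (k-i, d-k+i)$ rewrites every $\beta_{k-i, d-k-1+i}$ as a convex combination of $\beta_{k-i, d-k+i}$ and $\beta_{k-i+1, d-k-1+i}$, both of total $d$. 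After substitution, the betas that appear in $F_d^k$ are precisely those of the form $\beta_{k+1-j, d-k-1+j}$ for $j \in \{1, \ldots, k\}$, matching the target form.

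It then remains to collect the coefficient of $\beta_{k+1-j, d-k-1+j}$. Here the factor $1/(d-1)$ from the conversion identity cancels against the factor $(d-1)\theta/(k-1)$ multiplying $F_{d-1}^{k-1}$, leaving a clean expression free of $d$. A short simplification produces exactly the recursion \eqref{eq:recur-a}, with boundary terms handled by the conventions $a_0^{(k-1)} = a_k^{(k-1)} = 0$. Finally, to verify $(a_1^{(k)}, \ldots, a_k^{(k)}) \in S_{k,1}$, I would sum \eqref{eq:recur-a} over $j$ and reindex to get $\sum_j a_j^{(k)} = \sum_i a_i^{(k-1)}\bigl[\theta\tfrac{k-i}{k-1} + 1 - \theta\tfrac{k-i}{k-1}\bigr] = 1$ by the inductive hypothesis, and check non-negativity by noting that $\theta(k-j)/(k-1) \ge 0$ trivially, while the factor $1-\theta(k-j+1)/(k-1)$ multiplying $a_{j-1}^{(k-1)}$ is non-negative for $j \ge 2$ (since then $(k-j+1)/(k-1) \le 1$ and $\theta \le 1$) and for $j=1$ multiplies $a_0^{(k-1)} = 0$. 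The main obstacle is the index bookkeeping together with the observation that the two identities of Lemma \ref{lemma_ident} must be combined into the single conversion identity above; everything downstream is routine.
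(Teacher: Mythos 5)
Your proposal is correct and follows essentially the same route as the paper: induction on $k$ via the recursion of Lemma \ref{lemma_rec}, using the fact that the coefficients $a_i^{(k-1)}$ are independent of $d$ so the hypothesis can be invoked at both $d$ and $d-1$, and combining the two identities of Lemma \ref{lemma_ident} (via the binomial ratio $m/(m+n-1)$) to convert $\beta_{m,n-1}$ into betas with total parameter $d$ — your conversion identity $\beta_{m,n-1}=\tfrac{n-1}{m+n-1}\beta_{m,n}+\tfrac{m}{m+n-1}\beta_{m+1,n-1}$ is just a repackaging of the paper's display (\ref{decisive_step}). The coefficient collection, the cancellation of $d$, and the verification that the $a_i^{(k)}$ are non-negative and sum to one all check out (the latter is in fact spelled out more explicitly than in the paper's own proof).
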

\begin{proof}
If we fix $i \in \{1,\ldots,k-1\}$, then the second identity in Lemma \ref{lemma_ident} gives
\begin{align*}
 \frac{d-1}{k-1}\big(\beta_{k-i,d-k+i-1}(x)-\beta_{k-i,d-k+i}(x)\big) = -\frac{d-1}{k-1}\,\binom{d-2}{k-i-1}\,x^{k-i}\,(1-x)^{d-k+i-1}   = -\binom{d-1}{k-i}\,x^{k-i}\,(1-x)^{d-k+i-1}\,\underbrace{\frac{k-i}{k-1}}_{ \leq 1}.
\end{align*}
Consequently, we observe with the help of the first identity in Lemma \ref{lemma_ident}  that
\begin{align}
\frac{d-1}{k-1}\big(\beta_{k-i,d-k+i-1}-\beta_{k-i,d-k+i}\big)+\beta_{k-i,d-k+i}  = \frac{k-i}{k-1}\,\beta_{k-i+1,d-k+i-1}+\Big(1-\frac{k-i}{k-1}\Big)\,\beta_{k-i,d-k+i}. \label{decisive_step}
\end{align}
Now, inductively, we proceed as follows to compute $F_d^k$ via the recursion of Lemma \ref{lemma_rec}:
\begin{align*}
F_d^k  = \frac{d-1}{k-1}\,\theta\,F_{d-1}^{k-1}+\Big(1-\frac{d-1}{k-1}\,\theta \Big)F_d^{k-1}= \theta\,\Big[\frac{d-1}{k-1}\,\big(F^{k-1}_{d-1}-F^{k-1}_d \big)+F_d^{k-1}\Big]+(1-\theta)\,F_d^{k-1}.
\end{align*}
We know by induction that there exist $a^{(k-1)}_1 \geq 0,\ldots,a^{(k-1)}_{k-1} \geq 0$ that sum up to one and
\begin{gather*}
F_d^{k-1} = \sum_{i=1}^{k-1}a^{(k-1)}_i\,\beta_{k-i,d-k+i},\quad F_{d-1}^{k-1} = \sum_{i=1}^{k-1}a^{(k-1)}_i\,\beta_{k-i,d-1-k+i}.
\end{gather*}
Notice that we have used here that the $a_i^{(k-1)}$ are independent of $d$, which is important. We thus obtain 
\begin{align*}
F_d^k &= \theta\,\sum_{i=1}^{k-1}a^{(k-1)}_i\,\Big[\frac{d-1}{k-1}\,\big( \beta_{k-i,d-1-k+i}-\beta_{k-i,d-k+i}\big)+\beta_{k-i,d-k+i}\Big] +(1-\theta)\,F_d^{k-1}\\
& \stackrel{(\ref{decisive_step})}{=} \theta\,\sum_{i=1}^{k-1}a^{(k-1)}_i\,\Big[\frac{k-i}{k-1}\,\beta_{k-i+1,d-k+i-1}+\Big(1-\frac{k-i}{k-1}\Big)\,\beta_{k-i,d-k+i}\Big]+(1-\theta)\,F_d^{k-1}\\
& = \sum_{i=1}^{k-1}a^{(k-1)}_i\,\Big\{\theta\,\Big[\frac{k-i}{k-1}\,\beta_{k-i+1,d-k+i-1}+\Big(1-\frac{k-i}{k-1}\Big)\,\beta_{k-i,d-k+i}\Big] +(1-\theta)\,\beta_{k-i,d-k+i}\Big\}\\
& = \sum_{i=1}^{k-1}a^{(k-1)}_i\,\theta\,\frac{k-i}{k-1}\,\beta_{k-i+1,d-k+i-1}+\sum_{i=1}^{k-1}a^{(k-1)}_i\,\Big( 1-\theta\,\frac{k-i}{k-1}\Big)\,\beta_{k-i,d-k+i}\\
& = \sum_{i=0}^{k-2}a^{(k-1)}_{i+1}\,\theta\,\frac{k-i-1}{k-1}\,\beta_{k-i,d-k+i}+\sum_{i=1}^{k-1}a^{(k-1)}_i\,\Big( 1-\theta\,\frac{k-i}{k-1}\Big)\,\beta_{k-i,d-k+i}\\
& =\sum_{i=2}^{k-1}\Big[a^{(k-1)}_{i}\,\theta\,\frac{k-i}{k-1}+a^{(k-1)}_{i-1}\,\Big( 1-\theta\,\frac{k-i+1}{k-1}\Big) \Big]\,\beta_{k-i+1,d-k+i-1} + a^{(k-1)}_1\,\theta\,\beta_{k,d-k}+a^{(k-1)}_{k-1}\,\Big(1-\theta\,\frac{1}{k-1} \Big)\,\beta_{1,d-1}.
\end{align*}
This implies the claim.
\end{proof}

Apparently, $a_1^{(d)}=p^{-(d-1)}$. Since $\beta_{d,0} = \delta_1$ by our convenient notation, this implies that $V_p = V_d^{(d)}$ is equal to one with probability $a_1^{(d)}$ and with complementary probability $1-a_1^{(d)}$ follows an absolutely continuous distribution with support $[0,1]$. 

\section{Simulation of $\ell_p$-norm symmetric survival functions}\label{sec_sf}

Based on Theorem \ref{thm_sol}, we first derive a convenient method to simulate $V_p$ exactly. 
  
 \begin{lemma}[Simulating $V_p$] 
Let $W_1,\dots,W_{d-1}$ be iid from $\mathcal U[0,1]$, and  $W_{(i)}$ be the $i$-th order statistics, from the smallest to the largest, and $W_{(d)}:=1$. Define a counting process $(N_k)_{k=1}^d$ independent of $(W_1,\dots,W_{d-1})$ via $N_k=\sum_{j=1}^k B_j$, where $B_1=1$ and  for $j\in \{ 2,\dots,d\}$, 
 $$\IP(B_j=1\,|\,N_{j-1})  = 1-\IP(B_j=0\,|\,N_{j-1}) =  \frac{ N_{j-1}  }{j-1}\,\theta.$$ 
 Then $  W_{(N_k)} \sim F_d^k$.  
\end{lemma}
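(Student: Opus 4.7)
The plan is to combine two ingredients: the fact that the order statistics $W_{(j)}$ have known beta laws, and a direct calculation of the distribution of the counting variable $N_k$ showing it matches the mixture weights from Theorem \ref{thm_sol}.

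First I would condition on $N_k$. Because $(N_k)_{k=1}^d$ is independent of $(W_1,\ldots,W_{d-1})$, the conditional law of $W_{(N_k)}$ given $N_k=j$ is just that of $W_{(j)}$. A standard fact on uniform order statistics gives $W_{(j)}\sim \beta_{j,d-j}$ for $j\in\{1,\ldots,d-1\}$, and $W_{(d)}=1\sim\delta_1=\beta_{d,0}$ by the paper's convention. Hence
\begin{gather*}
W_{(N_k)} \sim \sum_{j=1}^{k} \IP(N_k=j)\,\beta_{j,d-j}.
\end{gather*}
Comparing with the statement of Theorem \ref{thm_sol} after the index change $j=k+1-i$, the lemma reduces to showing $\IP(N_k=k+1-i)=a_i^{(k)}$ for $i=1,\ldots,k$.

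Next I would prove this identity by induction on $k$. For $k=1$ both sides equal $1$. For the induction step, conditioning on $N_{k-1}$ and using the prescribed law of $B_k$ yields the first-order recursion
\begin{gather*}
\IP(N_k=j) = \IP(N_{k-1}=j)\,\Big(1-\frac{j\theta}{k-1}\Big)+\IP(N_{k-1}=j-1)\,\frac{(j-1)\theta}{k-1},
\end{gather*}
valid for $j=1,\ldots,k$ with the convention $\IP(N_{k-1}=0)=\IP(N_{k-1}=k)=0$. Substituting $i=k+1-j$ (so $k-j=i-1$ and $j-1=k-i$) and remembering that at level $k-1$ the correspondence is $j\leftrightarrow (k-1)+1-i=k-i$, the recursion becomes
\begin{gather*}
\IP(N_k=k+1-i) = \IP(N_{k-1}=k-i)\,\frac{(k-i)\theta}{k-1}+\IP(N_{k-1}=k-i+1-1)\Big(1-\frac{(k+1-i)\theta}{k-1}\Big),
\end{gather*}
which, by the induction hypothesis applied to level $k-1$, is exactly the recursion (\ref{eq:recur-a}) for $a_i^{(k)}$, with the same convention $a_0^{(k-1)}=a_k^{(k-1)}=0$.

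The main obstacle is purely bookkeeping: keeping the two different reindexings $(j\leftrightarrow k+1-i$ at level $k$ versus $j\leftrightarrow k-i$ at level $k-1)$ straight so that the counting-process recursion lines up term-by-term with (\ref{eq:recur-a}). Once that is matched, the statement of the lemma is an immediate consequence of Theorem \ref{thm_sol}.
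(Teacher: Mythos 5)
Your proposal is correct and is essentially the paper's own proof: the paper introduces $T_k=k+1-N_k$ and shows that $(\IP(T_k=i))$ satisfies the recursion (\ref{eq:recur-a}) with the same initial value as $(a_i^{(k)})$, then applies the law of total probability with $W_{(k+1-i)}\sim\beta_{k+1-i,d-k-1+i}$, which is exactly your induction in different notation. One typo to fix: in your reindexed recursion the second probability should read $\IP(N_{k-1}=k-i+1)$ (corresponding to $a_{i-1}^{(k-1)}$), not $\IP(N_{k-1}=k-i+1-1)$; your surrounding prose and conclusion make clear this is a slip rather than a gap.
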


 \begin{proof}
 Let $T_k=k+1-N_k=1+ \sum_{j=1}^k (1-B_j)$, $k \in \{1,\dots,d\}$.
  Note that $T_1=1$. For $k \in \{2,\dots,d\}$ and $i \in \{1,\dots,k\}$, 
 \begin{align*}
  \IP(T_k = i) & =  \IP(N_k = k-i+1)  =  \IP(N_{k-1} = k-i,~B_k=1)  +  \IP(N_{k-1} =k-i+1,~B_k=0)  \\
&=  \IP(N_{k-1} = k- i) \frac{k-i}{k-1}\theta  +  \IP(N_{k-1} =k-i+1)    \left(1-\frac{k-i+1}{k-1}\theta\right)
 \\&  =  \IP(T_{k-1} = i) \frac{k-i}{k-1}\theta  +  \IP(T_{k-1} =i-1)    \left(1-\frac{k-i+1}{k-1}\theta\right).
 \end{align*}
Hence, the sequence $(\IP(T_k=i): 1\le i\le k\le d)$  satisfies the recursive relation \eqref{eq:recur-a} and has the same initial element as $(a_{i}^{(k)}: 1\le i\le k\le d)$. As a consequence, $a_{i}^{(k)}= \IP(T_k=i)$ for each $i$ and $k$.  
 Note that $W_{(k+1-i)}\sim \beta_{k+1-i,d-k-1+i}$ for $i=1,\dots,k$.
The law of total probability implies     
$$  W_{(k+1-T_k)} \sim \sum_{i=1}^k \IP(T_k=i) \beta_{k+1-i,d-k-1+i} = \sum_{i=1}^k  a_{i}^{(k)} \beta_{k+1-i,d-k-1+i} = F_d^k. $$
Thus, $W_{(N_k)}\sim F_d^k$.
 \end{proof} 

Algorithm \ref{algo} summarizes our simulation algorithm for $V_p$. The sub-routine \textproc{SimulateU}$[0,1](n)$ denotes a simulation algorithm for a list of $n$ iid uniform variates on $[0,1]$.

\begin{algorithm}
\caption{Simulation of $V_p$}\label{algo}
\begin{algorithmic}[1]
\Procedure{SimulateVp}{$p,d$}
\State $\bm{W}=(W_1,\ldots,W_{d-1}) \gets$\textproc{SimulateU}$[0,1](d-1)$ 
\State $\bm{W} \gets$\textproc{Sort}$(\bm{W})$
\State $\bm{W} \gets (\bm{W},1)$
\State $N \gets 1$
\For{$j=2,\ldots,d$}
\State $B \gets 0$
\State $U \gets$\textproc{SimulateU}$[0,1](1)$
\If{$U<\theta\,\frac{N}{j-1}$}
\State $B \gets 1$
\EndIf
\State $N \gets N + B$
\EndFor
\State \Return $V_p \gets W_{N}$
\EndProcedure
\end{algorithmic}
\end{algorithm}

Now denote by $V_p$ a random variable satisfying $V^p_p \sim F_d^d$, for instance simulated via Algorithm \ref{algo}, and denote by $\bm{U}^{(1)}$ an independent random vector that is uniformly distributed on the standard unit simplex in $[0,1]^d$, for instance simulated using the stochastic representation $\bm{U}^{(1)} \sim \bm{\xi}^{(1)}/\norm{\bm{\xi}^{(1)}}_1$ relying on a simulation of $d$ iid unit exponentials. We consider the random vector $\bm{Z} = V_p\,\big(\bm{U}^{(1)}\big)^{\theta}$, and observe that by construction
\begin{align*}
\IP(\bm{Z}>\bm{z}) &= \IP\Big(\bm{U}^{(1)}>\frac{\bm{z}^p}{V_p^p}\Big) = \IE\Big[ \Big( 1-\frac{\norm{\bm{z}^p}_1}{V_p^p}\Big)^{d-1}_+\Big] = \IE\Big[ \Big( 1-\frac{\norm{\bm{z}}_p^{p}}{V_p^p}\Big)^{d-1}_+\Big]\\
& = \int_0^{1}\Big( 1-\frac{\norm{\bm{z}}_p^{p}}{v^p}\Big)_+^{d-1}\,\mathrm{d}F^d_d(v) \stackrel{(\ref{eq:recur1})}{=} \big(1-\norm{\bm{z}}_{p} \big)^{d-1}_+.
\end{align*}
More generally, let now $\varphi$ be an arbitrary, non-negative $d$-monotone function with $\varphi(0)=1$, and denote by $R_{\varphi}$ a random variable, unique in law, satisfying
\begin{gather*}
\IE\Big[ \Big(1-\frac{x}{R_{\varphi}}\Big)^{d-1}_+\Big] = \varphi(x),\quad x \geq 0,
\end{gather*}
independent of $V_p$ and $\bm{U}^{(1)}$. Then the random vector $\bm{Z}=R_{\varphi}\,V_p\,\big(\bm{U}^{(1)}\big)^{\theta}$
satisfies
\begin{gather}
\IP(\bm{Z}>\bm{z}) = \IP\Big( V_p\,\big(\bm{U}^{(1)}\big)^{\theta} > \frac{\bm{z}}{R_{\varphi}}\Big) =  \IE\Big[\Big(1-\frac{\norm{\bm{z}}_{p}}{R_{\varphi}} \Big)^{d-1}_+ \Big] = \varphi\big(\norm{\bm{z}}_{p} \big),
\label{gen_ellpsurvfunc}
\end{gather}
as desired. 
\begin{example}[Simulation of strict outer power Clayton copulas] \label{ex_AC}
Consider the Archimedean generator $\varphi(x)=(1-x/a)_+^{a}$ for a parameter $a \geq d-1$, which is known as a {strict Clayton generator}. In \cite[Example 3.3]{mcneil09} this is shown to be $d$-monotone and it is also shown that the distribution function of $R_{\varphi}$ is given by
\begin{gather*}
\IP(R_{\varphi} \leq x) = 1-\sum_{k=0}^{d-1}\frac{a\,(a-1)\,\cdots\,(a-k+1)}{k!}\,\Big( \frac{x}{a}\Big)^{k}\,\Big( 1-\frac{x}{a}\Big)^{a-k}, \quad x \in [0,a].
\end{gather*}
Taking the derivative, it is not difficult to compute from this expression that for $a>d-1$ the random variable $R_{\varphi}$ satisfies the distributional equality $R_{\varphi} /a \sim \beta_{d,a-d+1}$. Our results imply that the random vector $\bm{Z} \sim R_{\varphi} \,V_p\,\big(\bm{U}^{(1)}\big)^{\theta}$ has survival function $(1-\norm{.}_p/a)^{a}$. Consequently, the distribution function of the random vector $\big(\varphi(Z_1),\ldots,\varphi(Z_d)\big)$ is the Archimedean copula with $x \mapsto \varphi(x^{\theta})=(1-x^{\theta}/a)^{a}$ as Archimedean generator. This is a strict outer power Clayton copula. Fig.\ \ref{fig:AC} shows scatter plots for this copula in the case $d=2$ (because larger $d$ are difficult to visualize), which have been produced making use of Algorithm \ref{algo}.
\end{example}

\begin{figure}[h]
\centering
\includegraphics[width=0.4\linewidth]{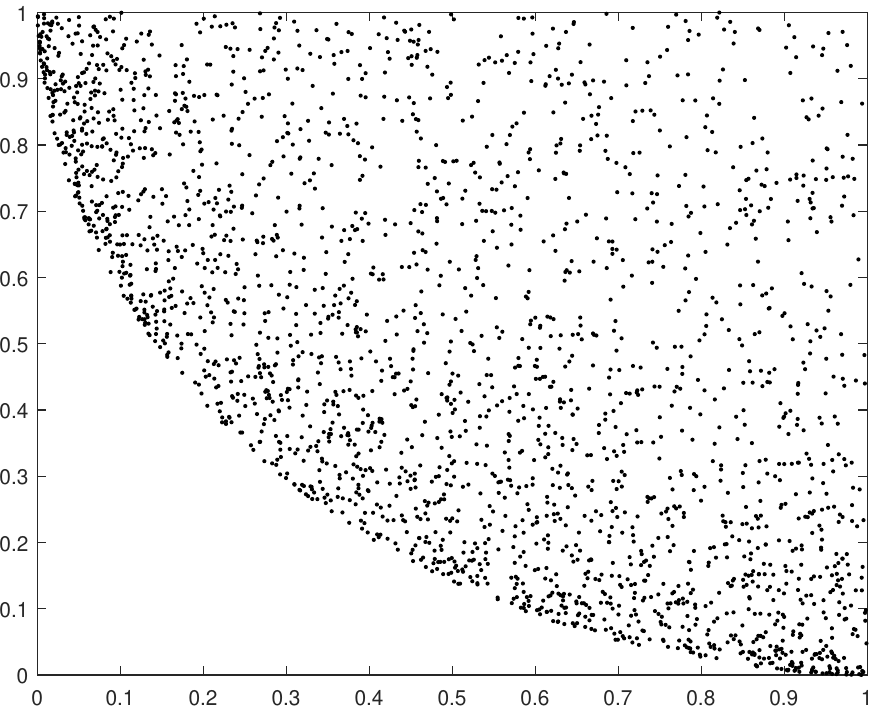}
\hfill
\includegraphics[width=0.4\linewidth]{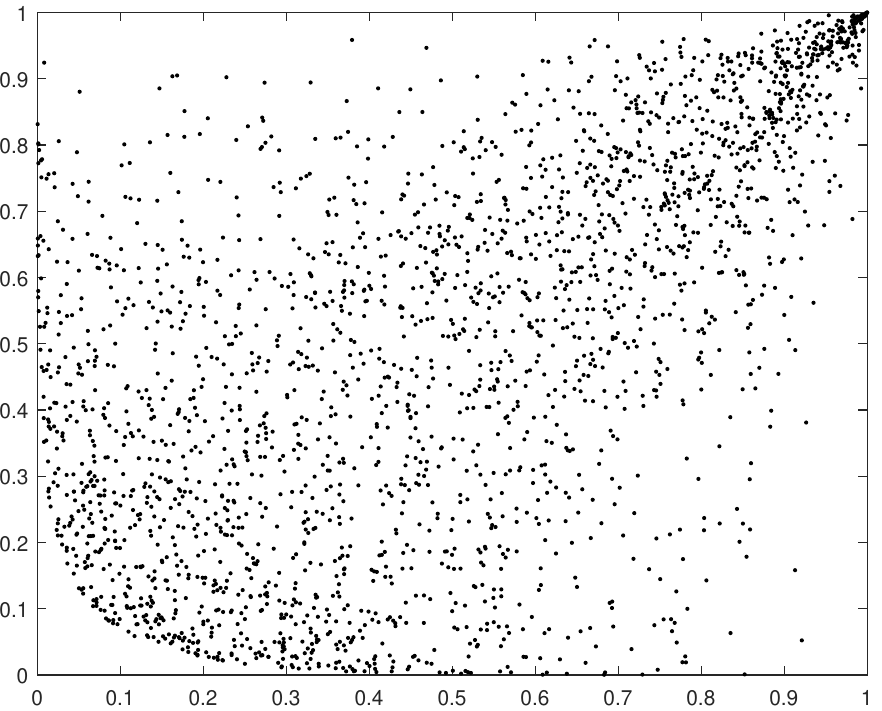}
\caption{Scatter plot of $2,500$ samples from the strict outer power Clayton copula in Example \ref{ex_AC}. Left: $p=1$ (so proper Clayton) and $a=1.75$. Right: $p=2.5$ and $a=1.75$.}
\label{fig:AC}
\end{figure}

\begin{remark}[Relation to positive stable distribution]
If $\varphi(x)=\exp(-x)$, it is well-known and easy to verify that $\bm{Z}$ with survival function $\exp(-\norm{.}_p)$ satisfies $\bm{Z} \sim M_{\theta}^{-\theta}\,(\bm{\xi}^{(1)}\big)^{\theta}$, where $M_{\theta}$ is a positive stable random variable with Laplace transform $x \mapsto \exp(-x^{\theta})$. Since $\varphi$ equals the Williamson-$d$-transform of an Erlang distributed random variable $E$ with $d$ degrees of freedom, our results thus imply the distributional identity
\begin{gather*}
M_{\theta}^{-\theta}\,\big(\bm{\xi}^{(1)}\big)^{\theta} \sim E\,V_p\,\Big(\frac{\bm{\xi}^{(1)}}{\norm{\bm{\xi}^{(1)}}_1}\Big)^{\theta}.
\end{gather*}
Since $V_p$ is a finite mixture of beta distributions, this resembles a distributional equality found in \cite[Theorem 1]{simon14}, representing the positive stable distribution with rational $\theta$ in terms of beta distributions.
\end{remark}

\section{Max-infinitely divisible laws with $\ell_p$-norm symmetric exponent measures} \label{sec_maxid}
A random vector $\bm{Y}$ taking values in $(0,\infty)^d$ is called {max-infinitely divisible} if for arbitrary $n \geq 1$ there exist iid random vectors $\bm{Y}^{(1,n)},\ldots,\bm{Y}^{(n,n)}$ such that
\begin{gather*}
\bm{Y} \sim \Big(\max_{i=1,\ldots,n}\big\{ Y^{(i,n)}_1\big\},\ldots,\max_{i=1,\ldots,n}\big\{ Y^{(i,n)}_d\big\} \Big).
\end{gather*}
To a large extent, a theory for max-infinitely divisible probability distributions can be obtained analogous to the theory for infinitely divisible probability distributions, when replacing the group operation ``addition'' in the latter with the semi-group operation ``maximum''. General stochastic representations in these theories rely on the notion of Poisson random measures and a textbook account on the topic is \cite{resnick87}. An analytical treatment for a Poisson random measure is given in terms of a Radon measure, and this carries over to a parameterization of the associated max-infinitely divisible probability distribution. Indeed, it is well known that $\bm{Y}$ is max-infinitely divisible if and only if its distribution function is given by
\begin{gather*}
\IP(\bm{Y} \leq \bm{y}) = \exp\Big[ - \mu\Big( E \setminus [\bm{0} ,\bm{y}] \Big)\Big],
\end{gather*}
where $\mu$ is a measure on $E:=[\bm{0},\bm{\infty}] \setminus\{\bm{0}\}$ subject to the properties
\begin{align*}
\mu\big( E \setminus [\bm{0},\bm{y}] \big)< \infty\,\quad \forall \bm{y}>\bm{0},\quad \lim_{\bm{y} \rightarrow \bm{\infty}}\mu\big(E \setminus  [\bm{0},\bm{y}] \big) = 0.
\end{align*}
The measure $\mu$ is called the {exponent measure} of $\bm{Y}$ and exponent measures $\mu$ with $\ell_1$-norm symmetric survival function are investigated in \cite{genest18}. We generalize this investigation to $\ell_p$-norm symmetric survival functions in the following. To wit, we say that $\mu$ has an $\ell_p$-norm symmetric survival function if there is a function $\varphi:(0,\infty) \rightarrow [0,\infty)$ in one variable, called {generator}, such that
\begin{gather*}
\mu\big( (\bm{y},\bm{\infty}]\big) = \varphi(\norm{\bm{y}}_p),\quad \bm{y} \in E.
\end{gather*}

As explained in \cite{genest18}, with Poincar\'e's inclusion exclusion identity we may write
\begin{align*}
\mu\Big( E \setminus [\bm{0} ,\bm{y}] \Big) = \sum_{\emptyset \neq I \subset \{1,\ldots,d\}}^{d}(-1)^{|I|+1}\,\mu\Big( ( \bm{y}_I,\bm{\infty}]\Big),
\end{align*}
where $\bm{y}_I \in [\bm{0},\bm{\infty})$ denotes a point whose $j$-th coordinate equals $y_j\,1_{\{j \in I\}}$. If now $\mu$ has $\ell_p$-norm symmetric survival function, then we obtain
\begin{align*}
\mu\Big( E \setminus [\bm{0} ,\bm{y}] \Big) = \sum_{\emptyset \neq I \subset \{1,\ldots,d\}}^{d}(-1)^{|I|+1}\,\varphi(\norm{\bm{y}_I}_{p}),
\end{align*}
so that the distribution function of $\bm{Y}$ is given in terms of the univariate function $\varphi$. Furthermore, it is immediately clear from this computation that $\bm{Y}$ is max-infinitely divisible with $\ell_p$-norm symmetric survival function and generator $\varphi$ if and only if the random vector $\bm{Y}^{p}$ is max-infinitely divisible with $\ell_1$-norm symmetric survival function and generator $x \mapsto \varphi(x^{\theta})$. The following lemma gives a concise recap of the results in \cite{genest18}.

\begin{lemma}[Genest et al., 2018 \cite{genest18}]\label{lemma_max_id}
Fix $p \geq 1$. The following are equivalent for a function $\varphi:(0,\infty) \rightarrow [0,\infty)$:
\begin{itemize}
\item[(a)] There exists a non-finite Radon measure $\nu$ on $(0,\infty]$ with $\nu(\{\infty\})=0$ such that 
\begin{gather*}
\varphi(t) =\varphi_{\nu}(t):= \int_t^{\infty}\Big( 1-\frac{t}{r}\Big)^{d-1}_+\,\nu(\mathrm{d}r).
\end{gather*}
\item[(b)] $\varphi$ is $d$-monotone and satisfies $\lim_{t \rightarrow \infty}\varphi(t) = 0$, $\lim_{t \searrow 0}\varphi(t) = \infty$.
\item[(c)] $\varphi$ is the generator of a max-infinitely divisible law on $(0,\infty)^d$ whose exponent measure has $\ell_1$-norm symmetric survival function.
\item[(d)] $\varphi$ is the generator of a max-infinitely divisible law on $(0,\infty)^d$ whose exponent measure has $\ell_p$-norm symmetric survival function.
\end{itemize}
\end{lemma}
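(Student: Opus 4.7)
The plan is to lift the Williamson-$d$-transform characterization for the $\ell_1$ case due to \cite{genest18} to the $\ell_p$ case via the componentwise-power bijection recorded just above the lemma. The equivalences (a) $\Leftrightarrow$ (b) $\Leftrightarrow$ (c) I would simply import from \cite{genest18}: there, (a) $\Leftrightarrow$ (b) is Williamson's integral representation of $d$-monotone functions with the prescribed limits, and (b) $\Leftrightarrow$ (c) comes from combining this with the inclusion-exclusion expansion of $\mu(E\setminus[\bm 0,\bm y])$ displayed earlier, which translates the max-id requirements $\mu(E\setminus[\bm 0,\bm y])<\infty$ for $\bm y>\bm 0$ and $\mu(E\setminus[\bm 0,\bm y])\to 0$ as $\bm y\to\bm\infty$ into exactly the monotonicity and boundary conditions of (b).

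For the new equivalence (c) $\Leftrightarrow$ (d) I would invoke the observation in the paragraph immediately preceding the lemma: by the identity $\|\bm y_I^p\|_1=\|\bm y_I\|_p^p$ and the inclusion-exclusion expansion of the exponent measure, $\bm Y$ is max-id with $\ell_p$-norm symmetric exponent measure generator $\varphi$ if and only if $\bm Y^p$ is max-id with $\ell_1$-norm symmetric exponent measure generator $\tilde{\varphi}(x):=\varphi(x^{\theta})$. Hence (d) applied to $\varphi$ is literally (c) applied to $\tilde{\varphi}$, which by the already-cited Genest et al.\ equivalence is the same as $\tilde{\varphi}$ being $d$-monotone with $\tilde{\varphi}(0^+)=\infty$ and $\tilde{\varphi}(\infty)=0$.

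What remains is to transfer condition (b) between $\varphi$ and $\tilde{\varphi}$. The boundary limits pass trivially since $x\mapsto x^{\theta}$ is a homeomorphism of $(0,\infty)$. The direction ``$\varphi$ is $d$-monotone $\Rightarrow$ $\tilde{\varphi}=\varphi\circ(\cdot^{\theta})$ is $d$-monotone'' I would settle by a direct application of Ressel's composition theorem \cite[Theorem~12]{ressel14}, copying verbatim the argument of Lemma~3.1: take $f=\varphi$ after reflection and $g(x)=-(-x)^{\theta}$ on $(-\infty,0)$, both of which are $d$-$\uparrow$ since $\theta\in(0,1]$. The reverse implication, namely that $d$-monotonicity of $\tilde{\varphi}$ forces $\varphi$ to be $d$-monotone, is the main obstacle: because $x\mapsto x^{p}$ for $p>1$ is convex rather than concave, Ressel's composition theorem does not apply in the opposite direction in the same neat way. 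My plan here would be to work directly from the integral representation (a) of $\tilde{\varphi}$, substitute $r=s^{p}$ to rewrite $\varphi(y)=\int_y^{\infty}\bigl(1-(y/s)^{p}\bigr)^{d-1}_{+}\,\nu(\mathrm{d}s)$ for the pushforward measure $\nu$, and then verify the sign pattern of $(-1)^k\varphi^{(k)}$ for $k=0,\ldots,d-2$ inductively by passing derivatives under the integral sign, using the non-finiteness of $\nu$ and the prescribed boundary behaviour of $\tilde{\varphi}$ to control the resulting boundary terms.
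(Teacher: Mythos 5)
Your reduction is exactly the one the paper uses: import (a)$\Leftrightarrow$(b)$\Leftrightarrow$(c) from \cite{genest18}, and treat (d) via the componentwise power bijection, which shows that $\varphi$ satisfies (d) if and only if $\tilde\varphi(x):=\varphi(x^{\theta})$ satisfies (c). Your argument for (c)$\Rightarrow$(d) (transfer of $d$-monotonicity from $\varphi$ to $\tilde\varphi$ via Ressel's composition theorem, or equivalently by mixing the functions $x\mapsto(1-(x/r)^{\theta})^{d-1}_+$ over $\nu$) is correct, and this is the only direction the paper actually uses later, in Lemma \ref{lemma_rAC}. You have also correctly isolated the step that the reduction does not deliver, namely that $d$-monotonicity of $\tilde\varphi$ forces $d$-monotonicity of $\varphi$; the paper's own proof is silent on this point and simply refers to the preceding text, which only establishes the correspondence between (d) for $\varphi$ and (c) for $\tilde\varphi$.

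The difficulty is that your plan for that remaining step cannot succeed: the implication ``$\varphi(\cdot^{\theta})$ satisfies (b) $\Rightarrow$ $\varphi$ satisfies (b)'' is false for $p>1$, so the inductive sign verification you propose will produce the wrong sign. Concretely, take $d=2$, $p=2$, and let $\nu$ have Lebesgue density $g(r)=r^{-2}1_{\{0<r\leq 1/2\}}+1_{\{1\leq r\leq 2\}}$, which is a non-finite Radon measure on $(0,\infty]$ with $\nu(\{\infty\})=0$. Then $\psi:=\varphi_{\nu}$ satisfies (a)--(c), and $\varphi(t):=\psi(t^{2})$ satisfies (d) by the pushforward argument, since $\varphi(x^{\theta})=\psi(x)$. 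However, for $s\in(1/2,1)$ one computes $\psi(s)=\int_{1}^{2}(1-s/r)\,\mathrm{d}r=1-s\log 2$, so that $\varphi(t)=1-t^{2}\log 2$ on $(1/\sqrt{2},1)$ is strictly concave there; hence $\varphi$ is not $2$-monotone and fails (b) and (c). So (d) genuinely does not imply (c) for the same $\varphi$, and no rearrangement of the integral representation can repair this. The honest conclusion is that (a)--(c) are equivalent and each implies (d), while the converse requires restating (d) in terms of $\tilde\varphi$ (equivalently, requiring the exponent measure to have survival function $\bm{y}\mapsto\varphi(\norm{\bm{y}}_p^{p})$); since Lemma \ref{lemma_rAC} only invokes the implication (a)$\Rightarrow$(d), the rest of the paper is unaffected, but your proof (and the paper's statement) should be weakened accordingly.
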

\begin{proof}
The equivalences of (a) - (c) have been established in \cite{genest18}, and that (d) is equivalent as well has been explained in the text preceding this lemma.
\end{proof}

Making use of Theorem \ref{thm_sol}, we are able to derive an exact simulation algorithm for max-infinitely divisible $\bm{Y}$ whose exponent measure satisfies (\ref{gen_sf}). The basis for this algorithm is the following lemma.

\begin{lemma}[Stochastic representation for $\bm{Y}$]\label{lemma_rAC}
If any of the conditions in Lemma \ref{lemma_max_id} is satisfied, a max-infinitely divisible random vector $\bm{Y}$ whose exponent measure $\mu$ is given by (\ref{gen_sf}) satisfies the distributional equality
\begin{gather}
\bm{Y} \sim \Big( \max_{k \geq 1}\big\{ G_{\nu}^{-1}(\xi_1+\ldots+\xi_k)\,Z_1^{(k)} \big\},\ldots, \max_{k \geq 1}\big\{ G_{\nu}^{-1}(\xi_1+\ldots+\xi_k)\,Z_d^{(k)} \big\}\Big),
\label{shortref}
\end{gather}
where $G_{\nu}(x):=\nu\big( (x,\infty]\big)$ denotes the survival function of the Radon measure $\nu$ in Lemma \ref{lemma_max_id}(a) and $G_{\nu}^{-1}$ its generalized inverse, $\xi_1,\xi_2,\ldots$ is a sequence of iid standard exponential random variables and, independently, $\bm{Z}^{(1)},\bm{Z}^{(2)},\ldots$ is a sequence of iid copies of $\bm{Z} \sim V_p\,\big(\bm{U}^{(1)}\big)^{\theta}$.
\end{lemma}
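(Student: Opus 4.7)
The plan is to realize $\bm{Y}$ as the componentwise maximum of the points of an explicit Poisson random measure whose mean measure is the exponent measure $\mu$, and then identify its decomposition into a radial and an angular part.

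First, recall the standard representation (see \cite{resnick87}): a random vector $\bm{Y}$ on $(0,\infty)^d$ is max-infinitely divisible with exponent measure $\mu$ if and only if
\begin{gather*}
\bm{Y} \sim \Big( \max_{k \geq 1} X_1^{(k)}, \ldots, \max_{k \geq 1} X_d^{(k)}\Big),
\end{gather*}
where $\{\bm{X}^{(k)}\}_{k \geq 1}$ is an enumeration of the atoms of a Poisson random measure $\Pi$ on $E$ with mean measure $\mu$. The strategy is therefore to construct such a $\Pi$ explicitly from the ingredients $\nu$, $V_p$, and $\bm{U}^{(1)}$.

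Second, I would lift the factorization \eqref{factor_ident}, derived in Section \ref{sec_sf} for probability measures, to the non-finite Radon setting: for every Borel $A \subset E$,
\begin{gather*}
\mu(A) = \int_{(0,\infty]} \int_{[0,1]} \int_{S_{d,1}} 1_{\{r\,v\,\bm{u}^{\theta} \in A\}}\, \mathrm{d}\bm{u}\, \mathrm{d}F_p(v)\, \mathrm{d}\nu(r).
\end{gather*}
Applied to $A = (\bm{x},\bm{\infty}]$, the inner double integral computes to $\IP(V_p(\bm U^{(1)})^{\theta} > \bm x / r) = (1 - \|\bm x\|_p / r)^{d-1}_+$ by the calculation (\ref{gen_ellpsurvfunc}) with $R \equiv 1$, and integrating against $\nu$ then yields $\varphi(\|\bm{x}\|_p)$ by Lemma \ref{lemma_max_id}(a). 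Hence the displayed measure coincides with $\mu$ on the $\pi$-system of upper orthants, which determines $\mu$ on $E$. In probabilistic language, $\mu$ is the pushforward of $\nu \otimes \IP_{\bm Z}$ under the multiplication map $(r,\bm z) \mapsto r\,\bm z$, where $\bm Z \sim V_p\,(\bm U^{(1)})^{\theta}$.

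Third, I construct $\Pi$ explicitly. Start with a Poisson random measure $\Pi_{\mathrm{rad}}$ on $(0,\infty]$ with mean measure $\nu$. Since $\nu((0,\infty]) = \infty$ and $\nu$ is Radon with $\nu(\{\infty\})=0$, the points of $\Pi_{\mathrm{rad}}$ admit the decreasing enumeration
\begin{gather*}
\eta_k := G_{\nu}^{-1}(\xi_1 + \ldots + \xi_k),\quad k \geq 1,
\end{gather*}
where $\xi_1, \xi_2, \ldots$ are iid standard exponentials (this is the standard inverse-CDF representation of the atoms of a Poisson process on the half-line). Now attach to each $\eta_k$ an independent mark $\bm Z^{(k)}$, iid copies of $\bm Z$. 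By the marking theorem, $\{(\eta_k, \bm Z^{(k)})\}_{k \geq 1}$ is a Poisson random measure on $(0,\infty] \times (0,\infty)^d$ with mean measure $\nu \otimes \IP_{\bm Z}$. Pushing forward by the multiplication map gives that $\{\eta_k \, \bm Z^{(k)}\}_{k \geq 1}$ is a Poisson random measure on $E$ whose mean measure is precisely $\mu$, by the second step.

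Plugging this PRM into the representation from the first step yields exactly \eqref{shortref}. The main conceptual obstacle is verifying that the factorization from (\ref{gen_ellpsurvfunc}) remains valid for the non-finite Radon measure $\nu$ (rather than the probability law $F_\varphi$); this I would handle as above by checking agreement on upper orthants via Lemma \ref{lemma_max_id}(a) and invoking a monotone-class argument. A minor technical point worth noting is that the componentwise maximum is a.s.\ finite because $\mu(E \setminus [\bm 0, \bm y]) < \infty$ for every $\bm y > \bm 0$, so only finitely many atoms escape any given lower orthant.
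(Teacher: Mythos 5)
Your proof is correct, and it reaches the same identity as the paper's proof but organizes the argument differently. The paper works directly with the Poisson random measure $P=\sum_{k}\delta_{(\xi_1+\cdots+\xi_k,\bm{Z}^{(k)})}$ on $[0,\infty)\times[\bm{0},\bm{1}]$ and computes the avoidance probability $-\log\IP(\tilde{\bm{Y}}\le\bm{y})$ head-on via the exponential functional formula, then unwinds it using inclusion--exclusion, the identity $\IP(\bm{Z}>\bm{z})=(1-\norm{\bm{z}}_p)^{d-1}_+$, and the change of variables $x\mapsto G_{\nu}^{-1}(x)$, arriving at $\sum_{\emptyset\neq I}(-1)^{|I|+1}\varphi(\norm{\bm{y}_I}_p)=\mu(E\setminus[\bm{0},\bm{y}])$. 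You instead first establish the radial disintegration of the exponent measure itself --- $\mu$ is the pushforward of $\nu\otimes\IP_{\bm{Z}}$ under $(r,\bm{z})\mapsto r\bm{z}$, verified on upper orthants plus a $\pi$-system argument --- and then assemble the point process $\{\eta_k\,\bm{Z}^{(k)}\}$ from the inverse-survival-function construction, the marking theorem and the mapping theorem, finishing with the general representation of max-infinitely divisible laws as componentwise maxima of the atoms of a PRM with mean measure $\mu$. The computational core is identical (both hinge on $\IP(\bm{Z}>\bm{z})=(1-\norm{\bm{z}}_p)^{d-1}_+$ and on $\int_0^\infty f(G_{\nu}^{-1}(x))\,\mathrm{d}x=\int f\,\mathrm{d}\nu$, i.e.\ Lemma \ref{lemma_max_id}(a)), but your version isolates the disintegration of $\mu$ as an explicit intermediate statement, which is conceptually cleaner and mirrors the title of the paper, at the price of invoking more off-the-shelf Poisson machinery; the paper's single self-contained computation avoids the need to justify that the upper orthants determine $\mu$ (for which your $\sigma$-finiteness remark via the sets $(\bm{1}/n,\bm{\infty}]$ is the right fix) and the measurability bookkeeping behind the marking and mapping steps.
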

\begin{proof}
Notice that $P := \sum_{k \geq 1}\delta_{(\xi_1+\ldots+\xi_k,\bm{Z}^{(k)})}$ is Poisson random measure on $[0,\infty) \times [\bm{0},\bm{1}]$ with mean measure $\mathrm{d}x\,\times \,\IP(\bm{Z} \leq \mathrm{d}\bm{z})$. We denote by $\tilde{\bm{Y}}$ the random vector on the right-hand side of (\ref{shortref}), and we compute with the exponential functional formula for the Poisson random measure, see \cite{resnick87}, and with the help of inclusion exclusion that 
\begin{align*}
& -\log\big[\IP(\tilde{\bm{Y}} \leq \bm{y})\big]  = -\log\Big(\IE\Big[ \exp\Big\{ -\int -\log\Big( \prod_{i=1}^{d}1_{\{G_{\nu}^{-1}(x)\,z_i \leq y_i\}}\Big)\,P(\mathrm{d}x,\mathrm{d}\bm{z}) \Big\}\Big]\Big)\\
& = \int_{(0,\infty)}\int_{[\bm{0},\bm{1}]} \Big(1- \prod_{i=1}^{d}1_{\{G_{\nu}^{-1}(x)\,z_i \leq y_i\}}\Big)\,\mathrm{d}x\,\IP(\bm{Z} \leq \mathrm{d}\bm{z}) = \IE\Big[ \max_{i=1,\ldots,d}\Big\{ G_{\nu}\Big( \frac{y_i}{Z_i}\Big)\Big\}\Big] \\
&\quad = \int_0^{\infty}\IP\Big(\max_{i=1,\ldots,d}\Big\{ G_{\nu}\Big( \frac{y_i}{Z_i}\Big)\Big\}>x\Big)\, \mathrm{d}x  = \int_0^{\infty}\IP\Big(\bigcup_{i=1}^{d}\Big\{ Z_i > \frac{y_i}{G_{\nu}^{-1}(x)}\Big\}\Big)\, \mathrm{d}x \\
& = \sum_{\emptyset \neq I \subset \{1,\ldots,d\}}(-1)^{|I|+1}\int_0^{\infty}\IP\Big( \bm{Z}> \frac{\bm{y}_I}{G_{\nu}^{-1}(x)}\Big)\, \mathrm{d}x  = \sum_{\emptyset \neq I \subset \{1,\ldots,d\}}(-1)^{|I|+1}\int_0^{\infty}\Big(1-\frac{\norm{\bm{y}_I}_p}{G_{\nu}^{-1}(x)}\Big)^{d-1}_+ \, \mathrm{d}x \\
&\quad = \sum_{\emptyset \neq I \subset \{1,\ldots,d\}}(-1)^{|I|+1}\int_0^{\infty}\Big(1-\frac{\norm{\bm{y}_I}_p}{x}\Big)^{d-1}_+ \, \nu(\mathrm{d}x) = \sum_{\emptyset \neq I \subset \{1,\ldots,d\}}(-1)^{|I|+1}\varphi\big( \norm{\bm{y}_I}_p\big),
\end{align*}
establishing the claim.
\end{proof}

The important aspects of the stochastic representation (\ref{shortref}) are that the random variables $\eta_k:=G_{\nu}^{-1}(\xi_1+\ldots+\xi_k)$ are independent of the random vectors $\bm{Z}^{(k)}$, and that the latter are bounded in the unit ball of the $\ell_p$-norm. Thus, this stochastic representation reflects the sequential integration (\ref{factor_ident}) in stochastic terms, with the $\eta_k$ accounting for the radial part and the $\bm{Z}^{(k)}$ for the directional (and in particular bounded) part. In the language of Poisson random measure, the random point measure
\begin{gather*}
P := \sum_{k \geq 1}\delta_{(\eta_k,\bm{Z}^{(k)})}
\end{gather*}
is a stochastic representation of a Poisson random measure on $(0,\infty] \times [0,1]^d$ with mean measure $\nu \times \IP(\bm{Z} \in \mathrm{d}\bm{z})$, and the points $\{\eta_k\}_{k \geq 1}$ denote a decreasing enumeration of the points of a Poisson random measure on $(0,\infty]$ with mean measure $\nu$. The fact that $\nu$ is non-finite implies that for arbitrary $\epsilon>0$ almost all $\eta_k$ lie within the interval $(0,\epsilon]$ almost surely, i.e.\ the $\eta_k$ tend to zero. Together with the boundedness of the $\bm{Z}^{(k)}$ this implies that the component-wise maxima in the stochastic representation (\ref{shortref}) are well-defined, since intuitively only the first few $k$ have non-negligible size. 
\par
Finally, it remains to be explained how to simulate random vectors $\bm{Y}$ with stochastic representation (\ref{stoch_maxid}) exactly, because it involves a maximum over infinitely many numbers. To this end, the decisive aspect is that the $\bm{Z}^{(k)}$ are bounded in $[\bm{0},\bm{1}]$, due to our decomposition into directional and radial part. This allows to generalize the algorithm of \cite{mai18} for the case $p=1$ to the general case $p \geq 1$, as we now explain. If we denote 
\begin{gather*}
M_n :=  \min_{j \in \{1,\ldots,d\}}\Big\{ \max_{k \in \{1,\ldots,n\}}\big\{ G_{\nu}^{-1}(\xi_1+\cdots+\xi_k)\,Z_j^{(k)} \big\} \Big\},\quad n \geq 1,
\end{gather*}
the $j$-th component of $\bm{Y}$ in (\ref{stoch_maxid}) is actually equal to
\begin{align*}
Y_j &=  \max_{k \geq 1}\big\{ G_{\nu}^{-1}(\xi_1+\cdots+\xi_k)\,Z_j^{(k)} \big\} =  \max_{k \in\{ 1,\ldots,N\}}\big\{ G_{\nu}^{-1}(\xi_1+\cdots+\xi_k)\,Z_j^{(k)} \big\},\\
N&=\min\{n \geq 1\,:\,G_{\nu}^{-1}(\xi_1+\ldots+\xi_{n+1}) \leq M_n\},
\end{align*}
and the random variable $N$ is independent of $j$ and almost surely finite, since $\bm{Z}$ is bounded. Summarizing, Algorithm \ref{algo_maxid} is an exact simulation algorithm for $\bm{Y}$, with \textproc{SimulateExp}$(n)$ denoting a sub-routine that generates $n$ iid standard exponentials.

\begin{algorithm}
\caption{Simulation of $\bm{Y}$ in (\ref{stoch_maxid}) with radial measure $\nu$}\label{algo_maxid}
\begin{algorithmic}[1]
\Procedure{SimulateY}{$p,d,\nu$}
\State $\bm{Y}=(Y_1,\ldots,Y_d) \gets (0,\ldots,0)$
\State $T \gets$\textproc{SimulateExp}$(1)$
\State $\eta \gets G_{\nu}^{-1}(T)$
\While{$\eta>\min\{Y_1,\ldots,Y_d\}$}
\State $\bm{\xi}=(\xi_1,\ldots,\xi_d) \gets$\textproc{SimulateExp}$(d)$
\State $V_p \gets $\textproc{SimulateVp}$(p,d)$
\State $\bm{Z} =(Z_1,\ldots,Z_d) \gets V_p\,\Big(\frac{\bm{\xi}}{\xi_1+\ldots+\xi_d}\Big)^{\theta}$
\For{$j=1,\ldots,d$}
\State $Y_j \gets \max\big\{ Y_j,\,\eta\,Z_j\big\}$
\EndFor
\State $T \gets T+$\textproc{SimulateExp}$(1)$
\State $\eta \gets G_{\nu}^{-1}(T)$
\EndWhile
\State \Return $\bm{Y}$
\EndProcedure
\end{algorithmic}
\end{algorithm}

\begin{example}[The negative logistic model]
We consider $\bm{Y}$ with distribution function equal to $\bm{y} \mapsto \exp\big(-f_{p}(1/y_1,\ldots,1/y_d)\big)$, where
\begin{gather*}
f_{p}(\bm{y}):=\sum_{j=1}^{d}(-1)^{j+1}\sum_{1 \leq i_1<\ldots<i_j \leq d}\Big( \sum_{k=1}^{d}y_k^{-p}\Big)^{-\theta},\quad \bm{y}\in [0,\infty)^d.
\end{gather*}
This is the so-called {negative logistic model}, the associated copula being termed {Galambos copula}, named after \cite{galambos75}. Notice that the law of $\bm{Y}^{p}$ equals a max-infinitely divisible distribution whose exponent measure has $\ell_1$-norm symmetric survival function generated by $\varphi(x)=x^{-\theta}$, as pointed out by Genest et al.\ \cite{genest18}. Two different exact simulation algorithms for $\bm{Y}$ can be found in \cite{dombry16}, and a third (truly different) one also in \cite{mai18}. For the case $p \geq 1$, Algorithm \ref{algo_maxid} is a distinct, original and exact simulation algorithm, which is based on the observation that the exponent measure of $\bm{Y}$ has an $\ell_p$-norm symmetric survival function generated by $\varphi(x)=1/x$. 
\end{example}

\begin{example}[An example with singular component] \label{ex_rAC1}
Consider the radial measure $\nu=\nu_{a}=a\,\sum_{k \geq 1}\delta_{1/k}$ for a parameter $a>0$. As pointed out in \cite[Example 2.3]{mai18} the associated generator $\varphi_{\nu}$ and required inverse $G^{-1}_{\nu}$ are 
\begin{align*}
\varphi_{\nu}(t) = a\,\sum_{k=1}^{\lfloor 1/t \rfloor}(1-k\,t)^{d-1},\quad G_{\nu}^{-1}(t) = 1/\Big\lceil \frac{t}{a} \Big\rceil.
\end{align*}
The scatter plots in Fig.\ \ref{fig:rACsing} depict samples of $\exp(-\varphi(\bm{Y}))$ for $d=2$, illustrating that $\bm{Y}$ is not absolutely continuous, and demonstrating the effect of introducing $p$ in comparison to the known case $p=1$.
\end{example}

\begin{figure}[h]
\centering
\includegraphics[width=0.4\linewidth]{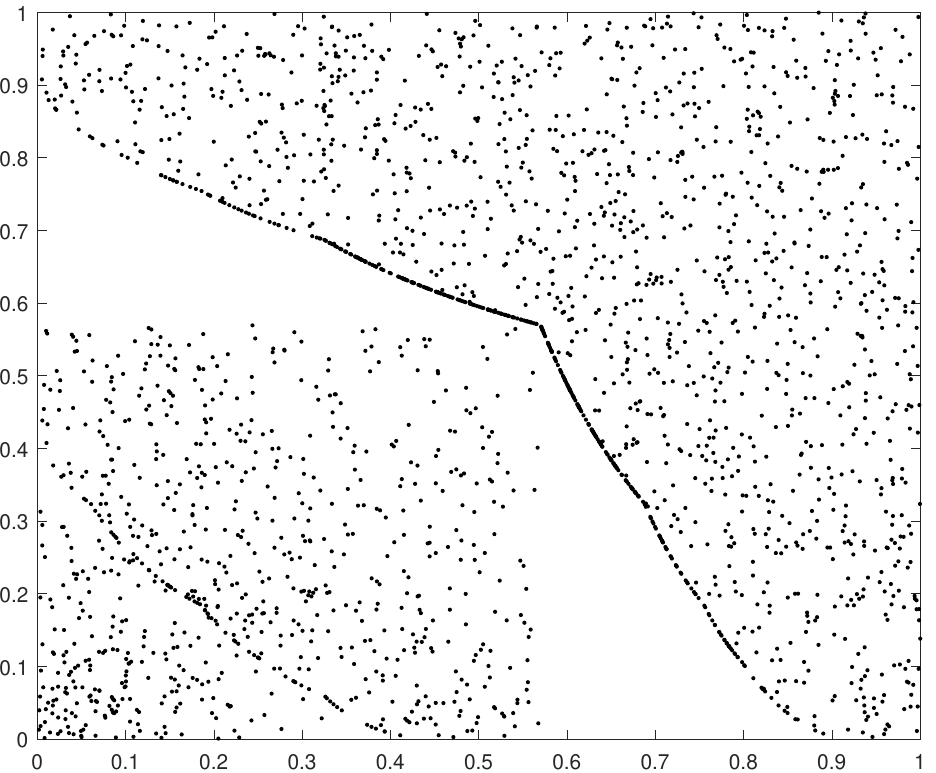}
\hfill
\includegraphics[width=0.4\linewidth]{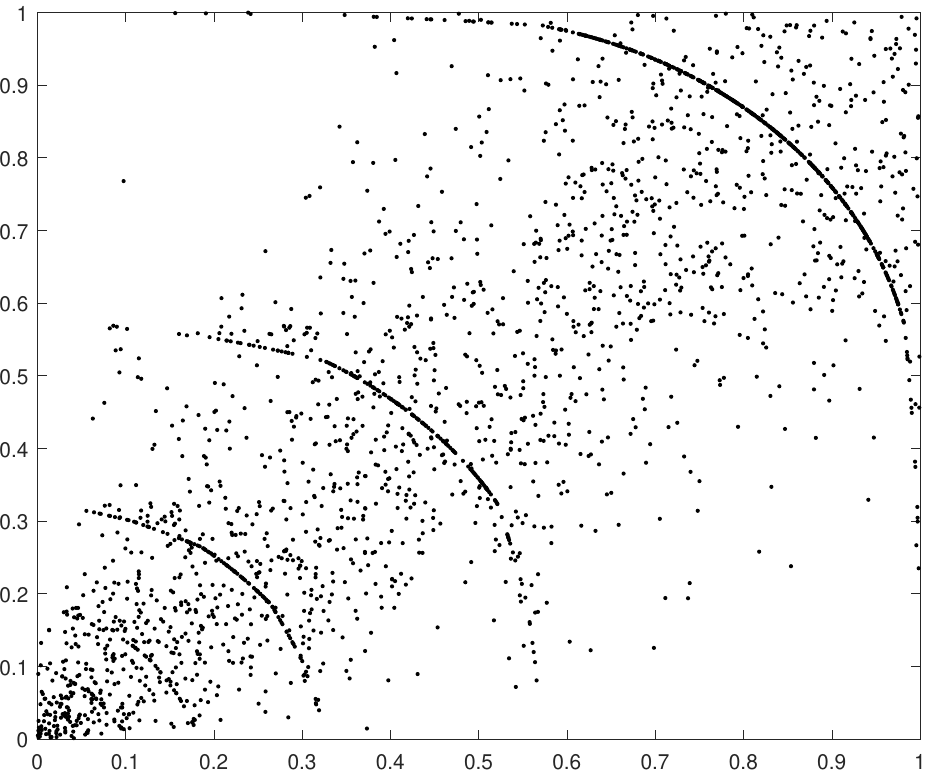}
\caption{Scatter plots from the reciprocal Archimedean copula in Example \ref{ex_rAC1}. Left: $p=1$ (so regular reciprocal Archimedean copula) and $a=1.125$. Right: $p=4$ and $a=1.125$.}
\label{fig:rACsing}
\end{figure}

\section*{Acknowledgments}
We thank the editor, the associate editor, and the anonymous referee for their valuable remarks on earlier versions of this manuscript. Ruodu Wang acknowledges financial support from the Natural Sciences and Engineering Research Council of Canada (RGPIN-2018-03823, RGPAS-2018-522590).

\appendix
\section{Some technical details}
\begin{lemma}[$\ell_p$-norm symmetric density]\label{lemma_lpcharsym}
The density $f$ of an absolutely continuous random vector $\bm{X}$ on $(0,\infty)^d$ is $\ell_p$-norm symmetric if and only if $\bm{X} \sim R\,\bm{U}^{(p)}$, where $\bm{U}^{(p)}$ is uniform on the $\ell_p$-sphere (restricted to the positive orthant w.l.o.g.), which we denote $S_{d,p}$, and $R$ is an independent positive and absolutely continuous random variable.
\end{lemma}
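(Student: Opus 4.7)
The plan is to use the polar decomposition adapted to the $\ell_p$-norm. Writing $S_{d,p}^+ := S_{d,p} \cap (0,\infty)^d$, consider the $C^\infty$-diffeomorphism $\Phi: (0,\infty) \times S_{d,p}^+ \to (0,\infty)^d$ given by $\Phi(r,\bm{u}) := r\,\bm{u}$, with inverse $\bm{x} \mapsto (\norm{\bm{x}}_p,\bm{x}/\norm{\bm{x}}_p)$. First I would establish that Lebesgue measure on $(0,\infty)^d$ pulls back under $\Phi$ to a product measure, namely
\[ \mathrm{d}\bm{x} = r^{d-1}\,\mathrm{d}r \otimes \sigma_p(\mathrm{d}\bm{u}) \]
for a finite, non-zero measure $\sigma_p$ on $S_{d,p}^+$ depending only on $p$ and $d$. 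I would then take $\bm{U}^{(p)}$ to be the random vector with law proportional to $\sigma_p$, i.e., the uniform distribution on the $\ell_p$-simplex $S_{d,p}^+$; consistency with the representation $\bm{U}^{(p)} \sim \bm{\xi}^{(p)}/\norm{\bm{\xi}^{(p)}}_p$ cited from \cite{rachev91} then follows because $\bm{\xi}^{(p)}$ has $\ell_p$-symmetric density $\propto \exp(-\norm{\bm{x}}_p^p/p)$, so applying the forward direction of the lemma to $\bm{\xi}^{(p)}$ identifies its angular part with $\sigma_p$ automatically.

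Given the disintegration, both implications fall out by a short calculation. For the forward direction, assume $f(\bm{x}) = g(\norm{\bm{x}}_p)$. Then for Borel sets $A \subset (0,\infty)$ and $B \subset S_{d,p}^+$,
\[ \IP\big(\norm{\bm{X}}_p \in A,\,\bm{X}/\norm{\bm{X}}_p \in B\big) = \int_A g(r)\,r^{d-1}\,\mathrm{d}r \cdot \sigma_p(B), \]
which factors, giving independence of $R := \norm{\bm{X}}_p$ and $\bm{U} := \bm{X}/\norm{\bm{X}}_p$ and identifying $\bm{U} \sim \bm{U}^{(p)}$. Conversely, if $\bm{X} \sim R\,\bm{U}^{(p)}$ with the factors independent, then pushing the product law $F_R(\mathrm{d}r) \otimes \sigma_p(\mathrm{d}\bm{u})/\sigma_p(S_{d,p}^+)$ forward through $\Phi$ yields a density on $(0,\infty)^d$ of the form $h(r)/(c\,r^{d-1})$ evaluated at $r = \norm{\bm{x}}_p$ (with $h$ a density of $R$), which depends on $\bm{x}$ only through $\norm{\bm{x}}_p$.

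The hard part is establishing the polar disintegration itself. There are two natural routes. The first is a direct Jacobian computation using the chart $(u_1,\ldots,u_{d-1}) \mapsto (u_1,\ldots,u_{d-1},(1-\sum_{i<d} u_i^p)^{1/p})$ on $S_{d,p}^+$, verifying by differentiation that the determinant factors cleanly into $r^{d-1}$ times a density on the sphere, which then \emph{defines} $\sigma_p$ explicitly. The second is a scaling argument: the dilation $\bm{x} \mapsto \lambda\,\bm{x}$ scales Lebesgue measure by $\lambda^{-d}$ and acts on $(r,\bm{u})$ by $(r,\bm{u}) \mapsto (\lambda r, \bm{u})$, so once one knows that the push-forward $(\Phi^{-1})_*\mathrm{d}\bm{x}$ admits a product disintegration $\rho(\mathrm{d}r) \otimes \sigma_p(\mathrm{d}\bm{u})$, matching both sides under every dilation forces $\rho(\mathrm{d}r) = c\,r^{d-1}\,\mathrm{d}r$. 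The scaling route is conceptually cleanest but still requires the a~priori existence of a product disintegration, which ultimately reduces to the same Jacobian being non-degenerate; for a concrete, self-contained appendix I would carry out the explicit Jacobian calculation.
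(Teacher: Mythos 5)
Your proposal is correct and follows essentially the same route as the paper: a polar change of variables $\bm{x}\mapsto(\norm{\bm{x}}_p,\bm{x}/\norm{\bm{x}}_p)$ whose Jacobian factors into a radial part times an angular measure on $S_{d,p}$, after which both implications are immediate. The paper's version is terser (it records the Jacobian and the change-of-variables identity for $\IE[b(\bm{X})]$ and dismisses the converse as clear), whereas you additionally spell out how $\sigma_p$ is defined and why it is consistent with the $\bm{\xi}^{(p)}/\norm{\bm{\xi}^{(p)}}_p$ representation, but the underlying argument is the same.
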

\begin{proof}
Clearly, if $\bm{X} \sim R\,\bm{U}^{(p)}$ then the density is $\ell_p$-norm symmetric. Now assume that $f(\bm{x})=g(\norm{\bm{x}}_p)$ for some function $g$ {of} one variable. We slightly generalize the computation on page 78 in \cite{mai17}, considering the mapping $h:(0,\infty)^d \rightarrow S_{d,p} \times (0,\infty)$, $\bm{x} \mapsto (\bm{x}/\norm{\bm{x}}_p,\norm{\bm{x}}_p)$. We observe that $|(h^{-1})^{'}(\bm{y},s)|=p\,s^{p+d-2}$ is independent of the first $d-1$ components of $h^{-1}$. For an arbitrary bounded and continuous function $b$ multivariate change of variables implies
\begin{align*}
\IE[b(\bm{X})] = \int b(\bm{x})\,f(\bm{x})\,\mathrm{d}\bm{x} = \int_{(0,\infty)}\int_{S_{d,p}} b(s\,\bm{y})\,\mathrm{d}\bm{y}\,g(s)\,p\,s^{p+d-2}\,\mathrm{d}s=  c_p\,\int_{(0,\infty)}\IE[b(R\,\bm{U}^{(p)})\,|\,R=s]\,g(s)\,p\,s^{p+d-2}\,\mathrm{d}s,
\end{align*}
where the positive constant $c_p$ denotes the volume of $S_{d,p}$. This implies the claim. 
\end{proof}

Finally, we sketch a proof for Lemma \ref{lemma_ident}.
\begin{proof}
To verify the first identity,  one may first prove via induction and integration by parts that
\begin{gather}
\beta_{m,n}(x) =  (m+n-1)!\,\sum_{k=0}^{n-1}\frac{x^{m+k}\,(1-x)^{n-1-k}}{(m+k)!\,(n-1-k)!}.
\label{cdf_beta}
\end{gather}
Using (\ref{cdf_beta}), the first identity is readily established. To verify the second identity, we make use of the first in $(\ast)$ below and observe
\begin{align*}
\beta_{m,n-1}(x)-\beta_{m,n}(x)  &= \int_0^x \frac{(m+n-2)!}{(m-1)!\,(n-2)!}y^{m-1}\,(1-y)^{n-2}-\frac{(m+n-1)!}{(m-1)!\,(n-1)!}y^{m-1}\,(1-y)^{n-1}\,\mathrm{d}y\\
&  = \int_0^x \frac{(m+n-2)!}{(m-1)!\,(n-2)!}y^{m-1}\,(1-y)^{n-2}\,\Big( y-\frac{m}{n-1}\,(1-y)\Big)\,\mathrm{d}y\\
&  = \frac{m}{m+n-1}\,\big( \beta_{m+1,n-1}(x)-\beta_{m,n}(x)\big) \stackrel{(\ast)}{=} -\frac{m}{m+n-1}\,\binom{m+n-1}{m}\,x^{m}\,(1-x)^{n-1}\\
&  = -\binom{m+n-2}{m-1}\,x^m\,(1-x)^{n-1}.
\end{align*}
\end{proof}

\end{document}